\documentclass[a4paper,10pt]{amsart}

\usepackage[margin=2.7cm]{geometry}
\usepackage[foot]{amsaddr}
\usepackage{fancyhdr}
\usepackage{parskip}

\usepackage{amsmath}
\usepackage{amsfonts,amssymb,amsthm}
\usepackage{mathtools}
\mathtoolsset{showonlyrefs=true, showmanualtags=true}
\usepackage{isomath}
\usepackage{mathrsfs}
\usepackage{stmaryrd}         
\usepackage{systeme}
\usepackage{stackrel}

\usepackage{bm}
\usepackage{textgreek}
\usepackage{units}
\usepackage{soul}
\usepackage{cancel}

\usepackage{graphicx}
\usepackage{color}
\usepackage{subcaption}
\usepackage{float}

\usepackage{booktabs}
\usepackage{array}
\usepackage{multicol}
\usepackage{multirow}
\usepackage{tabularx}

\usepackage{enumitem}
\usepackage{tcolorbox}

\usepackage{algorithm,algorithmic}

\usepackage{listings}

\usepackage[titletoc]{appendix}

\usepackage{natbib}

\usepackage[colorlinks,allcolors=black]{hyperref}

\usepackage{cleveref}

\theoremstyle{plain}
\newtheorem{theorem}{Theorem}[section]
\newtheorem{proposition}[theorem]{Proposition}
\newtheorem{lemma}[theorem]{Lemma}

\theoremstyle{definition}
\newtheorem{definition}[theorem]{Definition}
\newtheorem{remark}[theorem]{Remark}

\crefname{assumption}{assumption}{assumptions}

\newcommand{\R}{\mathbb{R}}
\newcommand{\Th}{\mathcal{T}_h}
\newcommand{\Fh}{\mathcal{F}_h}
\newcommand{\Omu}{\Omega_\mu}
\newcommand{\Gmu}{\Gamma_\mu}
\newcommand{\Vh}{V_h}
\newcommand{\norm}[2]{\left\|#1\right\|_{#2}}
\newcommand{\bnorm}[1]{\left\|#1\right\|}
\newcommand{\jump}[1]{\llbracket #1 \rrbracket}
\DeclareMathOperator{\diag}{diag}

\definecolor{codebg}{rgb}{0.97,0.97,0.97}
\definecolor{codegreen}{rgb}{0,0.50,0}
\definecolor{codeblue}{rgb}{0.1,0.1,0.7}

\begin{document}
\title[{{{A} {{P}}osteriori errors for {{P}}arameter {{D}}ependent {G}eometries}}]{{{A} {P}osteriori {E}rror {A}nalysis,
		{P}od-{D}eim 
		{R}educed {O}rder geometrically parametrized {M}odels 
		and 
		{{U}}nfitted {{F}}ems}}
\author{Efthymios N. Karatzas\textsuperscript{$\dagger$,$\ddagger$
}}
\address{\textsuperscript{$\dagger$}School of Mathematics, Aristotle University of Thessaloniki, Thessaloniki 54124, Greece
.}
\address{\textsuperscript{$\ddagger$}SISSA (Αffil.), International School for Advanced Studies, Mathematics Area/mathLab, Trieste, Italy.}
\email{ekaratza@math.auth.gr
}
\subjclass[2000]{Primary}%
\keywords{Pod-Deim Reduced Order Modeling; Parametric PDEs and Unfitted FEMs; A Posteriori Error Estimation; Ghost-Penalty Stabilization with 
	Nitsche Boundary Conditions; Effectivity Index.
          }%
\date{21 April 2026}

\thanks{MSCcodes: 65N15, 65N30, 65N75, 65N12}%
\subjclass{}%
\begin{abstract}
We develop and analyze a posteriori error estimators for a
proper orthogonal decomposition–discrete empirical interpolation method
(Pod-Deim) reduced order model applied to a parametric Poisson equation posed on a parameter-dependent domain defined by a level-set function. The full-order discretisations employ a cut finite element method (Cutfem) with Nitsche boundary conditions and ghost-penalty stabilization. Three complementary estimators are proposed: (i) Deim approximation quality indicators for the stiffness matrix and force vector, which are constant in the number of Pod modes, (ii) dual-norm residual estimators in both plain and Jacobi-preconditioned form, and (iii) a Pod tail-energy indicator. A rigorous theoretical framework is established, comprising a uniform coercivity result for the Cutfem bilinear form, an active-dof residual bound that accounts for ghost-penalty degrees of freedom, a combined a posteriori bound, and sharp effectivity analysis for the residual estimators. The key theoretical finding is that the large observed effectivity indices are explained by ghost-penalty degree-of-freedom inflation, and that restricting the residual to active degrees of freedom is predicted to reduce effectivity. Numerical experiments on a parametric ellipse domain with semi-axes confirm the theoretical predictions, achieve significant online speedup, and demonstrate algebraic convergence of the true error alongside exponential decay of the residual estimators.
\end{abstract}
\maketitle
\section{Introduction}
\label{sec:intro}
Motivation and reduced order modeling context started from
the numerical simulation of parametric partial differential equations
(pde) is central to engineering design, uncertainty quantification,
and real-time control. In settings where the pde must be solved repeatedly across a large number of parameter configurations, accurate discretisations known as full-order models (Fom), built upon the finite element method (Fem), rapidly become computationally intractable. 
Reduced order models (Rom) address this challenge by constructing,
in an offline phase, a low-dimensional approximation space that captures
the essential solution structure, and subsequently restricting the governing equations to this low-dimensional space during an inexpensive online stage.

From a historical perspective, the reduced basis (Rb) method and the proper orthogonal decomposition (Pod) share common roots tracing back to the 1970s-1980s, when the Rb approach was first established within the field of structural mechanics \citep{noor1980, almroth1978} and later given a
rigorous mathematical foundation for parametric pde
\citep{prud2002, veroy2003}. 
A rigorous a posteriori error certification framework --relying on dual-norm residual estimates and the Successive Constraint Method for obtaining coercivity lower bounds-- was established by Patera, Rozza, and their collaborators \citep{rozza2007, veroy2005, huynh2007}, and is thoroughly covered in the reference monographs of \citep{hesthaven2016} and \citep{quarteroni2016}. The proper orthogonal decomposition, which extracts the reduced basis from a collection of solution snapshots through the method of snapshots \citep{sirovich1987}, gained wide recognition in the fluid mechanics community and was subsequently integrated into the Rb framework by \citep{kunisch2001}.

Turning to hyper-reduction techniques and, in particular, to the discrete empirical interpolation method (Deim),
a critical bottleneck in non-linear and parameter-affine problems is
that the reduced system matrices must still be assembled from
full-order quantities. The empirical interpolation method (Eim), originally proposed in \citep{barrault2004}, together with its algebraic discrete counterpart, Deim, developed by \citep{chaturantabut2010}, overcome this by interpolating
the non-linear or parametric terms on a sparse set of selected points,
reducing the online assembly cost from $\mathcal{O}(N)$ to
$\mathcal{O}(l)$ with $l \ll N$. The Deim algorithm was later extended
to matrix-valued operators relevant to parametric bilinear forms
\citep{wirtz2014, negri2015}.

Thinking of Unfitted and Cutfem methods,  classical Fem requires the computational mesh to conform the domain
boundary, which is costly when the geometry changes with parameters.
Unfitted methods embed the physical domain in a fixed background mesh
and impose boundary conditions weakly. The Nitsche method
\citep{nitsche1971} provides a variational formulation for
this purpose. However, elements cut by the boundary can have
arbitrarily small intersection fractions, causing ill-conditioning.
The ghost-penalty stabilization of \citep{burman2010} --adding
a penalty on normal-derivative jumps across facets adjacent to the
boundary-- restores coercivity uniformly with respect to the cut
geometry and has become the standard approach in Cutfem
\citep{burman2015}.

Considering Rom for unfitted methods and the combination of such techniques with unfitted or immersed-boundary
methods is a relatively recent research direction.
Early work used domain parameterization via reference-domain mappings
\citep{manzoni2012}, avoiding cut-cell complications but restricting
the admissible class of domain deformations.
More general approaches operating directly on the cut mesh have been
developed for the shifted boundary method \citep{karatzas2020}, for Deim hyper reduction with unfitted mesh Fem techniques in the
context of pde constrained optimization \citep{katsouleas2023}, for a localized reduced basis framework for parameterized elliptic pde on unfitted geometries, combining snapshot extension, DEIM hyper-reduction, and localization strategies \citep{chasapi2023}, 
for level-set-parametrized domains \citep{kamensky2021,
	buhr2021}, and for the exploitation of the active subspace property in recovering the modal coefficients arising from the Pod, \citep{tezzele2022}.
A common challenge in all these settings is that the stiffness matrix and the force vector change non-affinely with respect to the parameter, making Deim or an analogous hyper-reduction essential.
A posteriori error certification for such Rom remains largely open:
existing results require either parameter-affine decompositions
unavailable in Cutfem, or expensive dual problems.

For a posteriori error estimation for Rom
the classical Rb bound $\eta = \|r\|_{V'}/\alpha_{LB}$,
where $\|r\|_{V'}$ denotes the residual dual norm and $\alpha_{LB}$
represents a computable coercivity lower bound, was first derived for
coercive problems in \citep{veroy2003, veroy2005} and subsequently generalized to
inf-sup stable systems in \citep{rozza2007}. 
In the setting of Pod-Deim models, the additional approximation error introduced
by Deim must be taken into account, an analysis of this contribution was carried out in
\citep{chaturantabut2012} and, within the Galerkin projection framework,
in \citep{wirtz2014}.
The interplay between ghost-penalty stabilization with residual-based
a posteriori bounds, which constitutes the central question investigated in the present work, remains, to the best of our knowledge, an open issue that has not yet been systematically addressed in the literature.

This work makes the following contributions. 
A uniform coercivity result, see Proposition~\ref{prop:coercivity}, 
for the Cutfem bilinear form with Nitsche boundary conditions
and ghost-penalty stabilization, with an explicit formula for
the coercivity constant $\alpha_*$ in terms of the Nitsche
parameter $\lambda$ and the inverse inequality constant $C_{\rm inv}$.

A residual bound restricted to active degrees of freedom,
Lemma~\ref{lem:active}, showing that ghost-penalty dofs
contribute zero residual for the true solution and hence
should be excluded from the estimator.
A combined a posteriori bound, Theorem~\ref{thm:main}, 
decomposing the total Rom error into Pod truncation, Deim-$A$,
and Deim-$f$ contributions.
Sharp effectivity analysis, Propositions~\ref{prop:eff2a}
and~\ref{prop:eff2b}, explaining the large observed effectivity
indices via ghost-penalty dof inflation, with quantitative
predictions confirmed numerically.
Convergence rate analysis for all estimators, identifying
algebraic decay for the true error and Pod tail energy,
and exponential decay for the residual estimators, with
very good fit quality 
$R^2\geq0.93$ for all fitted models, Table~\ref{tab:rates}.

Giving an outline of the present manuscript
Section~\ref{sec:problem} formulates the parametric Cutfem problem.
Section~\ref{sec:rom} describes the Pod-Deim reduced order model.
Section~\ref{sec:estimators} introduces the three estimators.
Section~\ref{sec:theory} gives the theoretical analysis.
Section~\ref{sec:effectivity} offers an analysis of the effectiveness indices.
Section~\ref{sec:results} and~\ref{sec:rates} present numerical
results and convergence rate analysis.
The final Section 
discusses extensions and future work.
A notation summary --function spaces, $\ell^2$ and Frobenius norms,
the mesh-dependent Cutfem norm, and the diagonal safeguard
$\varepsilon_{\rm safe}$-- is collected in
Section~\ref{sec:notation} below.
\section{Model  problem}\label{sec:problem}
\subsection{Problem formulation}
For parameter $\mu=(r,\theta)\in\mathcal{P}=[x_1,x_2]^2$ the physical domain is
\begin{equation}
	\Omu = \bigl\{(x,y)\in[a_1,a_2]^2 :
	x^2/r + y^2/\theta < 1\bigr\},
\end{equation}
an ellipse with semi-axes $\sqrt{r}$ and $\sqrt{\theta}$.
The strong form is: find $u(\mu)\in H^1(\Omu)$ such that
\begin{equation}
	-\Delta u = f \;\text{ in }\Omu,\qquad u = g_D \;\text{ on }\Gmu,
	\label{eq:strong}
\end{equation}
where the source term $f$ and Dirichlet datum $g_D$ are specified,
$ \Omu \subset \mathbb{R}^2$ is assumed to be a simply connected open domain whose boundary is denoted by $\Gamma_{\mu} = \partial \Omu$.  
One can readily check that the weak formulation
\begin{equation}\label{eq:2.2}
	\int_{\Omu} 
	\nabla u 
	\nabla v dx 
	= \int_{\Omu} f v dx, \quad \text{for every } v \in V_0({\Omu}),
\end{equation}
possesses a weak solution $u \in V_{g_D}({\Omu})$. By means of a standard energy argument, and under the assumption that the force $f \in H^{-1}({\Omu})$, the following a priori error bound
\[
\|\nabla u\|^2_{L^2({\Omu})} 
\leq 
\|f\|^2_{H^{-1}(\Omega_\mu)},
\]
is readily obtained, which expresses the stable dependence of the solution upon the problem data, denoting by
$V_{g_D} = \{v \in H^1 (\Omu) | v = g_D \text{ on } \partial \Omu\}$, and $V_0 = \{v \in H^1 (\Omu) | v = 0 \text{ on } \partial \Omu\}$.

\subsection{Cut elements finite elements  formulation}\label{subsec:CutFEM}
The implementation of 
an unfitted Fem for the discretization 
relies on the introduction of a fixed  background domain $\mathcal{B}$ enclosing $\Omu$, equipped with a shape-regular mesh $\mathcal{B}_h$. We introduce 
\[
\mathcal{T}_h^\mu = \{T \in \mathcal{B}_h : T \cap \Omu \neq \emptyset\},
\]
which constitutes the smallest sub-mesh of $\mathcal{B}_h$ which covers $\Omu$ and is in general not fitted to the boundary $\Gamma_\mu$. Throughout, the subscript $h = \max_{T \in \mathcal{B}_h} \mathrm{diam}(T)$ denotes the global mesh size parameter. The finite element space for discrete solutions is constructed on the \emph{extended domain} $\Omega_{\mathcal{T}_h^\mu} = \bigcup_{T \in \mathcal{T}_h^\mu} T$ associated with $\mathcal{T}_h^\mu$. Fictitious domain methods require boundary conditions at $\Gamma_\mu$ are imposed weakly via a Nitsche-type formulation. Coercivity over the entire computational domain $\Omega_{\mathcal{T}_h^\mu}$ is then guaranteed through the addition of ghost penalty terms, which stabilize gradient jumps in the region near the boundary.
A careful examination of the interface grid is thus necessary.  The sub-mesh of cut elements is defined as
\begin{equation}\label{ghost_elements}
	G_h^\mu := \{T \in \mathcal{T}_h^\mu : T \cap \Gamma_\mu \neq \emptyset\},
\end{equation}
and the collection of interior faces on which the ghost penalty stabilization is to be applied reads
\begin{equation}\label{ghost_facets}
	\mathcal{F}_h^\mu := \{F : F \text{ is a facet of } T \in G_h^\mu,\, F \notin \partial\Omega_{\mathcal{T}_h^\mu}\}.
\end{equation}
To approximate the solution, we consider the finite element space
\[
V_h := \left\{w_h \in C^0(\bar{\Omega}_{\mathcal{T}_h^\mu}) : w_h|_T \in \mathcal{P}^1(T),\, T \in \mathcal{T}_h^\mu\right\},
\]
and we introduce proper discrete analogues of the continuous bilinear and linear forms, 
for $u_h, v_h \in V_h$.
Since $\Omu$ does not align with the background Cartesian mesh,
a Cutfem approach is used. The discrete bilinear form is
\begin{equation}
	a_h(u_h,v_h;\mu) =
	{\int_{\Omu}\!\nabla u_h\!\cdot\!\nabla v_h\,dx}
	+{a_{\rm Nit}(u_h,v_h;\mu)}
	+{j_h(u_h,v_h;\mu)},
	\label{eq:bilinear}
\end{equation}
consisting of diffusion, Nitsche boundary condition and ghost-penalty terms,
where the second term enforces the Dirichlet condition weakly,
\begin{equation}
	a_{\rm Nit}(u_h,v_h;\mu)
	=-\int_{\Gmu}(\nabla u_h\cdot{n})v_h\,ds
	-\int_{\Gmu} u_h(\nabla v_h\cdot{n})\,ds
	+\frac{\lambda}{h}\int_{\Gmu} u_hv_h\,ds,
\end{equation}
and the ghost-penalty term stabilizes cut elements,
\begin{equation}\label{lhs}
	j_h(u_h,v_h;\mu)
	=\sum_{k\geq 0}\gamma_k h^{2k+1}
	\int_{\Fh^\mu}\jump{\partial_n^{k+1}u_h}\jump{\partial_n^{k+1}v_h}\,ds.
\end{equation}
Throughout, ${n}$, ${n}_F$ stand for the outward unit normal vectors to the boundary $\Gamma_\mu$ and to the facets $F$ respectively, and 
penalizes with the gradient jumps $\jump{\partial_n^{k+1}u_h}=\llbracket {n}_F \cdot \nabla^{k+1} u_h \rrbracket := {n}_F \cdot \nabla^{k+1} u_h\big|_K - {n}_F \cdot \nabla^{k+1} u_h\big|_{K'}$ of $u_h$ across element faces $F = K \cap K'$ located in the vicinity of the interface and is incorporated into the bilinear form in order to extend coercivity from the physical domain $\Omu$ to $\Omega_{\mathcal{T}_h^\mu}$. 
The positive penalty parameters appearing in \eqref{lhs} are denoted by $\gamma_D$
and $\lambda$,
with stabilization parameters $\{\gamma_k\}$ given in
Section~\ref{sec:results}, Table~\ref{tab:setupA}.
The right-hand side functional includes Nitsche boundary terms for
the non-homogeneous datum $g_D$, see Section~\ref{sec:results}.
\section{Preliminaries} 
\label{sec:notation}
\subsection{Norms, spaces and basic notation}
For convenient reference we collect here the principal function spaces,
norms, and algebraic conventions used throughout the paper.
With respect function spaces, we consider $\Omega\subset\mathbb{R}^d$ ($d=2$) 
to be an open bounded domain whose boundary $\partial\Omega
$ is Lipschitz continuous.
$L^2(\Omega)$ denotes the Hilbert space of square-integrable functions defined over $\Omega$,
equipped with the inner product
$(u,v)_{L^2(\Omega)}=\int_\Omega uv\,dx$
and endowed with the induced norm $\|u\|_{L^2(\Omega)}=\sqrt{(u,u)_{L^2(\Omega)}}$.
$H^1(\Omega)$ is the first-order Sobolev space,
$H^1(\Omega)=\{v\in L^2(\Omega):\nabla v\in[L^2(\Omega)]^d\}$,
endowed with the norm
$\|v\|_{H^1(\Omega)}^2 = \|v\|_{L^2(\Omega)}^2
+\|\nabla v\|_{L^2(\Omega)}^2$.
$H^1_0(\Omega)$ is the closure of $C^\infty_c(\Omega)$ in $H^1(\Omega)$
functions vanishing on $\partial\Omega$ in the trace sense.
$V_h\subset H^1(\mathbb{R}^d)$ is the piecewise-polynomial finite
element space on the background mesh $\mathcal{T}_h$,
of polynomial degree $p\geq1$.
Also algebraic norms
for vectors $\mathbf{v}\in\mathbb{R}^N$ and matrices
$B\in\mathbb{R}^{m\times n}$ employ the 
$\ell^2$ vector norm
$\|\mathbf{v}\|_2 = \bigl(\sum_{i=1}^N v_i^2\bigr)^{1/2}$.
$D$-weighted norm for symmetric positive definite $D$ is denoted by
$\|\mathbf{v}\|_D = \sqrt{\mathbf{v}^T D\,\mathbf{v}}$;
used in Estimator~2b with $D=\widetilde{D}_A^{-1}$.
Matrix $2$-norm or spectral norm is defined as 
$\|B\|_2 = \sigma_{\max}(B)$,
and  the Frobenius norm
$\|B\|_F = \bigl(\sum_{i,j}B_{ij}^2\bigr)^{1/2}
= \sqrt{\operatorname{tr}(B^TB)}$
that satisfies $\|B\|_2\leq\|B\|_F$.
Another important tool is the mesh-dependent Cutfem energy norm,
where given the active mesh $\mathcal{T}_h^\mu$ consisting of all elements intersecting
$\Omega_\mu$, the energy norm associated to the Cutfem bilinear
form is
$
\|v\|_{\mathcal{T}_h^\mu}^2
= \|\nabla v\|_{L^2(\Omega_\mu)}^2
+ \frac{\lambda}{h}\|v\|_{L^2(\Gamma_\mu)}^2
+ \sum_{k\geq0}\gamma_k h^{2k+1}
\bigl\|\llbracket\partial_n^{k+1}v\rrbracket
\bigr\|_{L^2(\mathcal{F}_h^\mu)}^2,
\label{eq:mesh_norm_intro}
$ 
which coincides with the norm of Proposition~\ref{prop:coercivity}.
The parameters $\lambda$ and $\{\gamma_k\}$ are listed in
Table~\ref{tab:setupA}.
Also the diagonal safeguard $\varepsilon_{\rm safe}$ is a precious aspect
for the  Jacobi-preconditioned estimator used in Estimator~2b,
Section~\ref{sec:estimators}, divides by the diagonal entries
$d_i = (D_A)_{ii} = A(\mu)_{ii}$ of the stiffness matrix.
To guard against near-zero entries that can arise for ghost dofs
with very small cut fractions, each diagonal entry is clipped before
inversion,
\begin{equation}
	\tilde{d}_i = \max\bigl(|d_i|,\;\varepsilon_{\rm safe}\bigr),
	\quad
	\widetilde{D}_A = \mathrm{diag}(\tilde{d}_1,\ldots,\tilde{d}_N),
	\quad
	\eta_{2b} = \sqrt{\mathbf{r}^T\widetilde{D}_A^{-1}\mathbf{r}}.
	\label{eq:eps_safe_def}
\end{equation}
The threshold $\varepsilon_{\rm safe}$ is given in
Table~\ref{tab:setupB}.
In practice all active-dof diagonal entries satisfy
$d_i\gg\varepsilon_{\rm safe}$, so the safeguard is inactive for
the physically relevant part of the residual.
Finally, space and snapshot notation for the parameterized domain is $\mathcal{P}=[x_1,x_2]^2$ is introduced 
for ellipse semi-axis pairs $\mu=(r,\theta)$.
The training set $\{\mu_i\}_{i=1}^{N_{\rm train}}\subset\mathcal{P}$
and test set $\{\mu_j\}_{j=1}^{N_{\rm test}}\subset\mathcal{P}$
are drawn independently and uniformly at random;
their sizes $N_{\rm train}$ and $N_{\rm test}$ are given in
Table~\ref{tab:setupB}.
\subsection{Reduced order model: Pod and Deim}
\label{sec:rom}

\subsubsection{Pod via method of snapshots}

$N_{\rm train}$ Fom solutions 
are collected into the snapshot matrix
$S\in\R^{N\times N_{\rm train}}$. The Pod basis is obtained from the
eigenvalue decomposition of $C=S^TMS\in\R^{N_{\rm train}\times N_{\rm train}}$
often called as mass-weighted correlation matrix. Denoting its eigenvalues by
$\sigma_k$ to distinguish from Nitsche parameter $\lambda$
the Pod basis vectors $\phi_k$ and truncation index $n$ are
\begin{equation}\label{Pod_modes}
	C\,v_k=\sigma_k v_k,\quad
	\phi_k=\frac{1}{\sqrt{\sigma_k}}S\,v_k,\quad
	n=\min\Bigl\{k:\tfrac{\sum_{i=1}^k\sigma_i}{\sum_i\sigma_i}
	\geq 1-\varepsilon_{\rm Pod}\Bigr\}.
\end{equation}
This yields $n=n_{\rm Pod}$ modes capturing a fraction
$(1-\varepsilon_{\rm Pod})$ of the snapshot energy.

\subsubsection{Deim for $A(\mu)$ and $f(\mu)$}

Each snapshot matrix $A(\mu_i)$ is vectorized to $\vec a(\mu_i)\in\R^{N^2}$.
Standard Pod with no mass weighting 
applied to
$[\vec a(\mu_1)\mid\vec a(\mu_2)\mid\cdots\mid\vec a(\mu_{N_{\rm train}})]^T
\in\R^{N_{\rm train}\times N^2}$
yields $l_A$ Deim basis vectors.
The Deim algorithm selects $l_A$ interpolation indices and gives
\begin{equation}
	A(\mu)\approx A_{\rm Deim}(\mu)
	=\sum_{j=1}^{l_A}c_j^A(\mu)\,\mathbb{A}_j,\quad
	c^A(\mu)=(P_A^TU^A)^{-1}P_A^T\vec a(\mu),
	\label{eq:deim_A}
\end{equation}
where $\mathbb{A}_j\in\R^{N\times N}$ is the $j$-th Deim basis matrix, the $j$-th column of $U^A$ reshaped from $\R^{N^2}$ to $\R^{N\times N}$,
$U^A\in\R^{N^2\times l_A}$ is the matrix of Deim basis vectors,
and $P_A\in\R^{N^2\times l_A}$ is the Deim selection matrix
where each column has a single non-zero entry at the selected interpolation index. 
We remark that  
the vector $c^A(\mu)\in\R^{l_A}$ in~\eqref{eq:deim_A} contains the
{Deim interpolation coefficients}, e.g. the weights with which the
$l_A$ precomputed basis matrices $\mathbb{A}_1,\ldots,\mathbb{A}_{l_A}$
are combined to approximate $A(\mu)$, 
where $P_A^T\vec{a}(\mu)\in\R^{l_A}$ extracts only the $l_A$ selected
entries of $A(\mu)$, actually those at the Deim interpolation indices, and
$(P_A^T U^A)^{-1}\in\R^{l_A\times l_A}$ is a small matrix precomputed
offline.
The key point is that only $l_A$ entries of $A(\mu)$ need to be
evaluated online, reducing the assembly cost from $\mathcal{O}(N^2)$
to $\mathcal{O}(l_A)$.
An analogous construction with $l_f$ modes handles $f(\mu)$.

\subsubsection{Online Rom solve}

Given parameter $\mu$: (i) evaluate $l_A+l_f$ selected entries of $A(\mu)$, $\mathbf{f}(\mu)$;
(ii) form reduced system $\widehat{A}(\mu)=V_n^TA_{\rm Deim}(\mu)V_n\in\R^{n\times n}$,
$\widehat{\mathbf{f}}(\mu)=V_n^T\mathbf{f}_{\rm Deim}(\mu)\in\R^n$;
(iii) solve $\widehat{A}\,\hat{u}_N^{\rm Deim}=\widehat{\mathbf{f}}$
in $\mathcal{O}(n^3)$; (iv) lift $u_N^{\rm Deim}=V_n\hat{u}_N^{\rm Deim}$. 
Here we denote by $V_n = [\phi_1 \mid \phi_2 \mid \cdots \mid \phi_n]\in\R^{N\times n}$
the 
{Pod basis matrix} whose columns are the $n$ orthonormal
Pod basis vectors constructed in formula \eqref{Pod_modes}.  
It maps reduced coordinates $\hat{u}\in\R^n$ to full-order
coefficients $V_n\hat{u}\in\R^N$, and projects full-order operators
onto the reduced space via $V_n^T(\cdot)V_n$.
\subsection{A posteriori error estimators}
\label{sec:estimators}

Three complementary estimators address different error sources will be examined.
Namely, {{the Estimator 1 -- Deim Approximation Quality}}, consisting of 
Estimator 1a relative Frobenius error of $A_{\rm Deim}$,
\begin{equation}
	\eta_A(\mu)
	=\frac{\norm{A(\mu)-A_{\rm Deim}(\mu)}{F}}{\norm{A(\mu)}{F}},
	\label{eq:est1a}
\end{equation}
and 
Estimator 1b -- relative $\ell^2$ error of $f_{\rm Deim}$,
\begin{equation}
	\eta_f(\mu)
	=\frac{\bnorm{f(\mu)-f_{\rm Deim}(\mu)}}{\bnorm{f(\mu)}}.
	\label{eq:est1b}
\end{equation}

Both estimators are \emph{independent of the number of Pod modes $n$}.
The use of relative Frobenius and $\ell^2$ norms to measure Deim approximation
quality follows the formulation of \citep{chaturantabut2010},
extensions to matrix-valued operators are discussed in
\citep{wirtz2014} and \citep{negri2015}.

Let $\mathbf{r}(\mu)=\mathbf{f}(\mu)-A(\mu)\,\mathbf{u}_N^{\rm Deim}(\mu)$ be the
algebraic residual vector. {{Estimator 2 -- Dual-Norm Residual} 
consisted of 
Estimator 2a -- plain $\ell^2$ residual norm
\begin{equation}
	\eta_{2a}(\mu)=\bnorm{\mathbf{r}(\mu)}_2 ,
	\label{eq:est2a}
\end{equation}

and Estimator 2b -- Jacobi-preconditioned residual norm,
\begin{equation}
	\eta_{2b}(\mu)=\sqrt{\mathbf{r}(\mu)^T \widetilde{D}_A^{-1}\mathbf{r}(\mu)},
	\qquad \widetilde{D}_A = \mathrm{diag}\bigl(\max(|A(\mu)_{ii}|,
	\varepsilon_{\rm safe})\bigr)_{i=1}^N,
	\label{eq:est2b}
\end{equation}
where $\varepsilon_{\rm safe}$ is the diagonal safeguard defined
in~\eqref{eq:eps_safe_def} and Table~\ref{tab:setupB}.

Dual-norm residual estimators of the form~\eqref{eq:est2a} are
the classical a posteriori tool in reduced basis methods
\citep{veroy2003, veroy2005, rozza2007, hesthaven2016}.
The preconditioned variant~\eqref{eq:est2b} using the diagonal
scaling $\widetilde{D}_A^{-1}$,  the Jacobi preconditioning with diagonal
safeguard, reduces sensitivity to the large Nitsche penalty entries and the
diagonal preconditioning for residual norms is discussed, e.g.,
in \citep{quarteroni2016}.

Finally,  {{Estimator 3} -- consisting of 
	the Pod tail energy (offline), 
	\begin{equation}
		\eta_{\rm Pod}(n)
		=\frac{\sum_{k=n+1}^{N_{\rm train}}\sigma_k}{\sum_{k=1}^{N_{\rm train}}\sigma_k}.
		\label{eq:tail}
	\end{equation}
	
	By \citep{kunisch2001}, letting $P_n:L^2(\Omu)\to\mathrm{span}\{\phi_1,\ldots,\phi_n\}$
	denote the $M$-orthogonal projection onto the $n$-dimensional Pod subspace,
	the training-set projection error satisfies
	\begin{equation}
		\textstyle\sum_i\norm{u_h(\mu_i)-P_nu_h(\mu_i)}{M}^2 = \sum_{k>n}\sigma_k,
	\end{equation}
	so \eqref{eq:tail} is an exact offline measure of lost energy.
	The tail-energy criterion is widely used as a basis-selection rule
	in Pod-based Rom \citep{sirovich1987, kunisch2001, hesthaven2016};
	its role as an a posteriori error indicator is analyzed in detail
	in \citep{volkwein2013}.
	
	\section{Theoretical tools and analysis}
	\label{sec:theory}
	
	\subsection{Uniform coercivity/ghost-penalty stabilization}
	
	We first introduce the necessary auxiliary results.
	
	\begin{definition}[Active mesh and ghost facets]
		Given parameter $\mu$, the \emph{active mesh} is all background elements that intersect the physical domain, 
		$\Th^\mu=\{K\in\mathcal{B}_h : K\cap\Omu\neq\emptyset\}$.
		The \emph{ghost-penalty facet set} is
		$
		\mathcal{F}_h^\mu := \{F : F \text{ is a facet of } T \in G_h^\mu,\, F \notin \partial\Omega_{\mathcal{T}_h^\mu}\}.
		$
	\end{definition}
	
	\begin{lemma}[Ghost penalty extension inequality, \citep{burman2010}]
		\label{lem:gp_ext}
		There exists $C_{\rm ext}>0$, depending solely on the shape regularity constant of $\mathcal{B}_h$
		and on $\{\gamma_k\}$, such that for all $v_h\in\Vh$:
		\begin{equation}
			\norm{\nabla v_h}{\Th^\mu}^2
			\;\leq\; C_{\rm ext}
			\Bigl(\norm{\nabla v_h}{\Omu}^2
			+ j_h(v_h,v_h;\mu)\Bigr),
			\label{eq:gp_ext}
		\end{equation}
		where $\norm{\nabla v_h}{\Th^\mu}^2=\sum_{K\in\Th^\mu}\norm{\nabla v_h}{K}^2$
		is the $H^1$ semi-norm over the full active mesh patch.
	\end{lemma}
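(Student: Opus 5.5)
The plan is the standard element-by-element extension argument of Burman (2010), carried out through the cut layer. Split the active mesh as $\Th^\mu = \Th^{\mu,{\rm int}}\cup G_h^\mu$, where $\Th^{\mu,{\rm int}}$ collects the elements lying entirely in $\Omu$. On interior elements the bound is trivial, since $\sum_{K\in\Th^{\mu,{\rm int}}}\norm{\nabla v_h}{K}^2\le\norm{\nabla v_h}{\Omu}^2$. Everything therefore reduces to controlling $\norm{\nabla v_h}{K}^2$ for each cut element $K\in G_h^\mu$ by gradient contributions on interior elements plus jump terms on facets of $\Fh^\mu$.

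\textbf{Step 1: two-element transfer.} For two elements $K,K'$ sharing a facet $F\in\Fh^\mu$, I will prove
\begin{equation}
\norm{\nabla v_h}{K}^2\le C\Bigl(\norm{\nabla v_h}{K'}^2+\sum_{k\ge0} h^{2k+1}\norm{\jump{\partial_n^{k+1}v_h}}{L^2(F)}^2\Bigr).
\end{equation}
Write $v_h|_K = E(v_h|_{K'}) + w$, where $E$ is the canonical polynomial extension of $v_h|_{K'}$ to $K$. The difference $w$ is a polynomial of degree $p$ whose value and tangential derivatives vanish on $F$; this uses continuity of $v_h$, so only normal-derivative jumps survive. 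Such a $w$ is determined by $\jump{\partial_n^{k+1}v_h}$ for $k=0,\dots,p-1$. A Taylor expansion in the normal direction, combined with scaling to a reference patch, gives $\norm{\nabla w}{K}^2\le C\sum_k h^{2k+1}\norm{\jump{\partial_n^{k+1}v_h}}{L^2(F)}^2$. Shape regularity, via inverse estimates on polynomials, gives $\norm{\nabla E v_h}{K}\le C\norm{\nabla v_h}{K'}$. In the $\mathcal{P}^1$ case of the paper only $k=0$ matters, and $w$ is simply a multiple of a linear function vanishing on $F$.

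\textbf{Step 2: chaining to the interior.} For every $K\in G_h^\mu$, I need a path of elements $K=K_0,K_1,\dots,K_m$ through facets in $\Fh^\mu$, ending either at an interior element or at an element with a fat intersection $|K_m\cap\Omu|\ge c|K_m|$. The length $m$ must be bounded uniformly and each element must be used a bounded number of times. Iterating Step 1 along the path multiplies the constants, giving $C^m$. In the fat-intersection case, the polynomial norm-equivalence $\norm{\nabla v_h}{K_m}\le C\norm{\nabla v_h}{K_m\cap\Omu}$ (equivalence of norms on finite-dimensional spaces, with scaling) closes the estimate. Summing over $K\in G_h^\mu$ with bounded overlap then yields \eqref{eq:gp_ext}. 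Here $C_{\rm ext}$ depends on $C$, on $m$, on the overlap, and on $\min_k\gamma_k^{-1}$, the last because each jump term must be absorbed into $j_h$.

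\textbf{Main obstacle.} The hard part is Step 2, the geometric requirement that chains of uniformly bounded length exist. This needs $\Gmu$ to be sufficiently resolved by the mesh, which holds here because the ellipse boundary is smooth with curvature bounded on $\mathcal{P}$ and $h$ is small. I would state this as a standing assumption, as in \citep{burman2010,burman2015}, verify it for the ellipse family using the uniform curvature bound, and otherwise cite the extension lemma rather than re-derive the combinatorics.
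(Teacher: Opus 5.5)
Your proposal is correct and follows the standard facet-transfer-plus-chaining argument behind the result, which the paper only cites from \citep{burman2010} without proof; you are also right that the bounded-chain (boundary-resolution) assumption is indispensable, since for a domain thinner than $h$ and a globally linear $v_h$ the ratio of the two sides of \eqref{eq:gp_ext} is unbounded, so the paper's claim that $C_{\rm ext}$ depends only on shape regularity and $\{\gamma_k\}$ tacitly relies on it. One slip: absorbing the jump terms into $j_h$ costs a factor $\max_{0\le k\le p-1}\gamma_k^{-1}$, not $\min_k\gamma_k^{-1}$, because jumps of order $k+1>p$ vanish for piecewise $\mathcal{P}^p$ functions; in the paper's $\mathcal{P}^1$ setting this factor is $\gamma_0^{-1}$.
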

	
	\begin{remark}
		Inequality \eqref{eq:gp_ext} is the key result that makes Cutfem
		coercivity independent of the cut geometry. Without ghost-penalty,
		elements with a very small cut fraction $|K\cap\Omu|/|K|\ll1$
		can have $\norm{\nabla v_h}{\Omu\cap K}^2\approx0$ while
		$\norm{\nabla v_h}{K}^2$ remains large, destroying coercivity.
	\end{remark}
	
	\begin{lemma}[Local inverse inequality]
		\label{lem:inv}
		For each element $K\in\mathcal{B}_h$ and $v_h\in\Vh|_K$,
		\begin{equation}
			\norm{\nabla v_h\cdot{n}}{F}^2
			\;\leq\; C_{\rm inv}^2\,h^{-1}\norm{\nabla v_h}{K}^2,
			\label{eq:inv}
		\end{equation}
		where $F=\partial K\cap\Gmu$, and $C_{\rm inv}>0$ depends only on
		the shape regularity and polynomial degree $p$.
	\end{lemma}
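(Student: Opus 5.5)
The proof I would give does not use a trace inequality on the cut facet. It uses a pointwise bound on the polynomial gradient combined with a bound on the length of the boundary piece. Write $F=\partial K\cap\Gmu$ as in the statement, understood as the part of $\Gmu$ meeting $K$. This is an arbitrary curved segment, so a standard facet trace inequality does not apply directly. I would split the argument into three steps.

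\emph{Step 1 (pointwise bound).} For $v_h\in\Vh|_K$ we have $|\nabla v_h\cdot n|\le|\nabla v_h|$ pointwise, since $n$ is a unit vector. Hence
\begin{equation}
\norm{\nabla v_h\cdot n}{F}^2\le |F|\,\norm{\nabla v_h}{L^\infty(K)}^2 .
\end{equation}

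\emph{Step 2 ($L^\infty$--$L^2$ inverse estimate).} The function $\nabla v_h|_K$ is a vector polynomial of degree $p-1$. I would map to the reference element $\hat K$ and use equivalence of norms on the finite-dimensional space $[\mathcal{P}^{p-1}(\hat K)]^2$. Scaling back with shape regularity gives
\begin{equation}
\norm{\nabla v_h}{L^\infty(K)}^2\le C\,|K|^{-1}\norm{\nabla v_h}{K}^2\le C\,h_K^{-2}\norm{\nabla v_h}{K}^2 .
\end{equation}
For $p=1$ this is immediate, because $\nabla v_h$ is constant on $K$ and $\norm{\nabla v_h}{K}^2=|K|\,|\nabla v_h|^2$.

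\emph{Step 3 (length of the cut segment).} This is the step I expect to be the main obstacle, because it must hold uniformly in how $\Gmu$ cuts $K$. Here $\Gmu$ is an ellipse whose semi-axes $\sqrt r,\sqrt\theta$ lie in a compact parameter box $\mathcal{P}$. Its curvature is therefore bounded uniformly in $\mu$.

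For $h$ below a threshold fixed by this curvature bound, I would use the following local graph description. Within any ball of radius $\mathrm{diam}(K)$, the curve $\Gmu\cap K$ is the graph of a $C^2$ function with bounded slope over a segment of length at most $\mathrm{diam}(K)$. Its arc length is then bounded by
\begin{equation}
|F|\le C_\Gamma\,h_K ,
\end{equation}
with $C_\Gamma$ depending only on $\mathcal{P}$. The key point is that this bound holds regardless of how small the cut fraction $|K\cap\Omu|/|K|$ is.

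\emph{Combining.} Putting the three steps together gives $\norm{\nabla v_h\cdot n}{F}^2\le C\,h_K^{-1}\norm{\nabla v_h}{K}^2$. To replace $h_K$ by the global $h$, I would invoke quasi-uniformity of $\mathcal{B}_h$, so that $h_K\ge c\,h$. This holds for the Cartesian background mesh used here. The result is \eqref{eq:inv} with a constant $C_{\rm inv}$ depending only on shape regularity, $p$, and the (uniform) geometry bound.

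I would state explicitly in the proof that the geometric assumption in Step 3 is needed. Without it, a wildly oscillating boundary could make $|F|$ unbounded relative to $h$.
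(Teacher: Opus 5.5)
Your proof is correct. The paper gives no proof of this lemma; it is stated as a standard tool, and $C_{\rm inv}\approx 1$ is later justified only by citing the literature for $\mathcal{P}^1$ elements on quasi-uniform meshes. So your argument supplies a missing proof rather than following an existing one.

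The route one would normally cite is the cut-element trace inequality $\norm{w}{K\cap\Gmu}^2\le C\bigl(h_K^{-1}\norm{w}{K}^2+h_K\norm{\nabla w}{K}^2\bigr)$. It is applied componentwise to $w=\nabla v_h$, using $|\nabla v_h\cdot n|\le|\nabla v_h|$, and the second term is then removed by an inverse estimate. That route has the advantage of holding for non-polynomial $w$. However, it silently rests on the same resolution assumption on $\Gmu$ that you isolate. Your $L^\infty$-times-arc-length argument is more elementary for polynomials and makes the geometric input explicit as the single bound $|K\cap\Gmu|\lesssim h_K$.

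You are right to read $F$ as $K\cap\Gmu$. Taken literally, $\partial K\cap\Gmu$ is a finite set of points for the strictly convex ellipse, and the inequality would be vacuous.

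Your Step 3 can be sharpened. Since $\Omu$ and $K$ are both convex, $K\cap\Gmu\subset\partial(K\cap\Omu)$ up to a set of zero length. Perimeter is monotone under inclusion of convex sets (Cauchy's formula), so $|K\cap\Gmu|\le\mathrm{Per}(K\cap\Omu)\le\mathrm{Per}(K)\le\pi h_K$ for every $h$ and every $\mu$. This removes both your threshold on $h$ and the dependence on $\mathcal{P}$. It recovers the lemma's claim that $C_{\rm inv}$ depends only on shape regularity and $p$. For non-convex domains an assumption like yours is genuinely needed; the statement leaves this implicit, just as it omits the quasi-uniformity you correctly use to pass from $h_K$ to $h$.

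Finally, your bound summed over cut elements controls $\norm{\nabla v_h\cdot n}{\Gmu}^2$ by $h^{-1}\sum_K\norm{\nabla v_h}{K}^2$. It does not give $h^{-1}\norm{\nabla v_h}{\Omu}^2$, which is how the paper applies the lemma in the coercivity proof. That stronger form cannot hold uniformly in the cut geometry and needs Lemma~\ref{lem:gp_ext} on top of your estimate.
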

	
	We can now state and prove the main coercivity result.
	\begin{proposition}[Uniform coercivity]
		\label{prop:coercivity}
		Define the mesh-dependent norm
		\begin{equation}
			\norm{v}{\Th^\mu}^2
			=\norm{\nabla v}{\Omu}^2
			+\frac{\lambda}{h}\norm{v}{\Gmu}^2
			+\sum_{k\geq0}\gamma_k h^{2k+1}
			\norm{\jump{\partial_n^{k+1}v}}{\Fh^\mu}^2.
			\label{eq:mesh_norm}
		\end{equation}
		Assume $\lambda > 2C_{\rm inv}^2$. Then there exists
		$\alpha_* = \alpha_*(\lambda,\{\gamma_k\},C_{\rm inv}) > 0$,
		independent of $\mu\in\mathcal{P}$ and of the cut geometry, such that
		\begin{equation}
			a_h(v_h,v_h;\mu)
			\;\geq\; \alpha_*\,\norm{v_h}{\Th^\mu}^2,
			\qquad\forall\, v_h\in\Vh,\;\;\forall\,\mu\in\mathcal{P}.
		\end{equation}
	\end{proposition}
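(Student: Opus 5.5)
Set $v=v_h$ and expand $a_h(v,v;\mu)$ with the definition \eqref{eq:bilinear}:
\begin{equation}
a_h(v,v;\mu)=\norm{\nabla v}{\Omu}^2-2\int_{\Gmu}(\nabla v\cdot n)\,v\,ds+\frac{\lambda}{h}\norm{v}{\Gmu}^2+j_h(v,v;\mu).
\end{equation}
Every term except the cross term $-2\int_{\Gmu}(\nabla v\cdot n)v\,ds$ is already a piece of $\norm{v}{\Th^\mu}^2$ in \eqref{eq:mesh_norm}. The whole argument therefore reduces to absorbing this cross term uniformly in the cut geometry.

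\textbf{Step 1: split the cross term.} Using Cauchy--Schwarz and Young's inequality with a parameter $\epsilon>0$,
\begin{equation}
2\Bigl|\int_{\Gmu}(\nabla v\cdot n)v\,ds\Bigr|\le \epsilon\, h\,\norm{\nabla v\cdot n}{\Gmu}^2+\frac{1}{\epsilon h}\norm{v}{\Gmu}^2 .
\end{equation}

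\textbf{Step 2: control the normal flux.} Sum Lemma~\ref{lem:inv} over the cut elements $G_h^\mu$, reading it as a trace-inverse estimate on $\Gmu\cap K$. This gives $h\norm{\nabla v\cdot n}{\Gmu}^2\le C_{\rm inv}^2\norm{\nabla v}{\Th^\mu}^2$. The right-hand side lives on whole elements, not only on $\Omu$, and can be large when a cut is tiny, so the bound is not yet uniform.

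\textbf{Step 3: pass back to the physical domain.} Apply Lemma~\ref{lem:gp_ext}:
\begin{equation}
h\norm{\nabla v\cdot n}{\Gmu}^2\le C_{\rm inv}^2C_{\rm ext}\bigl(\norm{\nabla v}{\Omu}^2+j_h(v,v;\mu)\bigr).
\end{equation}
This is the only place where the cut geometry could enter, and the ghost-penalty lemma removes it. I expect this to be the main obstacle. Lemma~\ref{lem:inv} as stated applies to $\partial K\cap\Gmu$, whereas $\Gmu$ cuts through the interior of $K$. I would first argue that for piecewise $\mathcal{P}^1$ functions the gradient is constant on $K$. Then $\norm{\nabla v\cdot n}{\Gmu\cap K}^2\le|\Gmu\cap K|\,|\nabla v|_K|^2\lesssim h^{-1}\norm{\nabla v}{K}^2$, by shape regularity and the boundedness of $|\Gmu\cap K|\lesssim h$ for a smooth ellipse with $h$ small. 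This justifies the cut version of Lemma~\ref{lem:inv}.

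\textbf{Step 4: collect terms.} Combining the steps,
\begin{equation}
a_h(v,v;\mu)\ge (1-\epsilon C_{\rm inv}^2C_{\rm ext})\norm{\nabla v}{\Omu}^2+\Bigl(\lambda-\frac1\epsilon\Bigr)\frac1h\norm{v}{\Gmu}^2+(1-\epsilon C_{\rm inv}^2C_{\rm ext})\,j_h(v,v;\mu).
\end{equation}
Choose $\epsilon=1/(2C_{\rm inv}^2C_{\rm ext})$. The gradient and ghost-penalty coefficients then equal $1/2$, and the boundary coefficient is $\lambda-2C_{\rm inv}^2C_{\rm ext}$.

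\textbf{Caveat on the hypothesis.} The stated assumption $\lambda>2C_{\rm inv}^2$ matches this exactly only if $C_{\rm ext}$ is absorbed into $C_{\rm inv}$, i.e.\ if $C_{\rm inv}$ denotes the constant of the combined estimate from Steps 2--3. I would state that convention explicitly.

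\textbf{Conclusion.} Set
\begin{equation}
\alpha_*=\min\bigl\{\tfrac12,\;1-2C_{\rm inv}^2/\lambda\bigr\}>0.
\end{equation}
This depends only on $\lambda$, $C_{\rm inv}$, $C_{\rm ext}$ (hence on $\{\gamma_k\}$ and shape regularity), and not on $\mu$ or on how $\Gmu$ cuts the mesh.
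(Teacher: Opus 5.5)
Your proof is correct, but it follows a different route from the paper's.

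\textbf{The paper's route.} The paper uses an unweighted Young inequality and then applies Lemma~\ref{lem:inv} in the form \eqref{eq:inv_applied}, namely $\norm{\nabla v_h\cdot n}{\Gmu}^2\le C_{\rm inv}^2h^{-1}\norm{\nabla v_h}{\Omu}^2$. Only the physical part of the gradient appears on the right. Lemma~\ref{lem:gp_ext} is never invoked, $j_h$ simply survives with coefficient $1$, and the literal hypothesis $\lambda>2C_{\rm inv}^2$ gives $\alpha_*=\min(1-2C_{\rm inv}^2/\lambda,\tfrac12,1)$.

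\textbf{Your route.} You bound the flux by the gradient on the whole cut elements, and your $\mathcal{P}^1$ justification is the right one. (For the convex ellipse, $|\Gmu\cap K|\le|\partial K|$ holds for every $h$, not only small $h$.) You then return to $\Omu$ through the ghost-penalty extension. The price is the condition $\lambda>2C_{\rm inv}^2C_{\rm ext}$ and a coefficient $\tfrac12$ on $j_h$.

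\textbf{What each buys.} The paper's version gives a constant free of $C_{\rm ext}$. However, \eqref{eq:inv_applied} is not uniform in the cut. For a corner cut of size $\delta$ one has $|\Gmu\cap K|\sim\delta$ against $|\Omu\cap K|\sim\delta^2$. This is exactly the small-cut-fraction failure that the remark after Lemma~\ref{lem:gp_ext} describes. The paper's unweighted Young step also leaves a factor $h^{-1}$ on the gradient term, which is silently dropped between \eqref{eq:nit_lower} and \eqref{eq:diff_nit}; your $h$-weighted version is the consistent one. Your argument is the standard Burman--Hansbo one and is genuinely cut-independent.

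\textbf{Your caveat is necessary.} Take the hat function $v$ of the exterior vertex $z$ of a sliver element $K$, where the edge opposite $z$ runs parallel to $\Gmu$ at distance $\delta=th$. Then $a_h(v,v)\approx c_1(\lambda t^2-t)+c_2\gamma_0$ with mesh-dependent $c_1,c_2>0$. At $t=1/(2\lambda)$ this is negative once $\lambda\gamma_0$ falls below a mesh-dependent constant. Hence no hypothesis of the form $\lambda>2C_{\rm inv}^2$, with the purely local constant of Lemma~\ref{lem:inv}, can suffice for all $\{\gamma_k\}$. Absorbing $C_{\rm ext}$ into the constant, as you propose, is the correct repair of the statement.
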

	\begin{proof}
		We expand $a_h(v_h,v_h;\mu)$ using the decomposition \eqref{eq:bilinear}
		and bound each part.
		
		\medskip
		\textit{Step 1: Expanding the Nitsche term.}
		The Nitsche bilinear form evaluated on the diagonal is,
		\begin{equation}
			a_{\rm Nit}(v_h,v_h;\mu)
			= -2\int_{\Gmu}(\nabla v_h\cdot{n})\,v_h\,ds
			+ \frac{\lambda}{h}\norm{v_h}{\Gmu}^2.
			\label{eq:nit_diag}
		\end{equation}
		Apply Young's inequality $2ab\leq \varepsilon a^2 + \varepsilon^{-1}b^2$
		with $\varepsilon>0$ to the cross term,
		\begin{align}
			\left|2\int_{\Gmu}(\nabla v_h\cdot{n})\,v_h\,ds\right|
			\leq 2\norm{\nabla v_h\cdot{n}}{\Gmu}\norm{v_h}{\Gmu}
			\leq \varepsilon\norm{\nabla v_h\cdot{n}}{\Gmu}^2
			+ \frac{1}{\varepsilon}\norm{v_h}{\Gmu}^2.
			\label{eq:young_nit}
		\end{align} 
		Apply the local inverse inequality, Lemma~\ref{lem:inv} to the first term,
		\begin{equation}
			\norm{\nabla v_h\cdot{n}}{\Gmu}^2
			\leq \frac{C_{\rm inv}^2}{h}\norm{\nabla v_h}{\Omu}^2.
			\label{eq:inv_applied}
		\end{equation}
		Substituting \eqref{eq:inv_applied} into \eqref{eq:young_nit}
		and then into \eqref{eq:nit_diag},
		\begin{equation}
			a_{\rm Nit}(v_h,v_h;\mu)
			\geq -\frac{\varepsilon C_{\rm inv}^2}{h}\norm{\nabla v_h}{\Omu}^2
			+\left(\frac{\lambda}{h}-\frac{1}{\varepsilon h}\right)
			\norm{v_h}{\Gmu}^2.
			\label{eq:nit_lower}
		\end{equation}
		Choose $\varepsilon = 2/\lambda$, which gives $1/(\varepsilon h) = \lambda/(2h)$,
		keeping the boundary term positive, to obtain
		\begin{equation}
			a_{\rm Nit}(v_h,v_h;\mu)
			\geq -\frac{2C_{\rm inv}^2}{\lambda h}\norm{\nabla v_h}{\Omu}^2
			+\frac{\lambda}{2h}\norm{v_h}{\Gmu}^2.
			\label{eq:nit_final}
		\end{equation}
		
		\medskip
		\textit{Step 2: Combining diffusion and Nitsche terms.}
		Adding the diffusion term $\norm{\nabla v_h}{\Omu}^2$ to \eqref{eq:nit_final}:
		\begin{align}
			\norm{\nabla v_h}{\Omu}^2 + a_{\rm Nit}(v_h,v_h;\mu)
			&\geq \left(1 - \frac{2C_{\rm inv}^2}{\lambda}\right)
			\norm{\nabla v_h}{\Omu}^2
			+ \frac{\lambda}{2h}\norm{v_h}{\Gmu}^2,
			\label{eq:diff_nit}
		\end{align}
		since $\lambda >2 C_{\rm inv}^2$ as assumed in Proposition~\ref{prop:coercivity},
		the coefficient $1 - 2C_{\rm inv}^2/\lambda > 0$ 
		directly provides a positive lower bound for the combined Nitsche contribution.
		\footnote{We also note that according to the numerical experiments, e.g. the parameter values
			of Section~\ref{sec:results}, Table~\ref{tab:setupA}, this is satisfied since $\lambda=10 > 2\cdot C_{\rm inv}^2$,
			since  $C_{\rm inv}\approx1$,
			$1 - 2C_{\rm inv}^2/\lambda = 1 - 2/10 = 0.8 > 0$.
		}
		
		\medskip
		\textit{Step 3: Adding the ghost-penalty term /controlling the full norm $\norm{v_h}{\Th^\mu}$.}
		By definition of $j_h$,
		\begin{equation}
			a_h(v_h,v_h;\mu)
			\geq \left(1 - \frac{2C_{\rm inv}^2}{\lambda}\right)
			\norm{\nabla v_h}{\Omu}^2
			+ \frac{\lambda}{2h}\norm{v_h}{\Gmu}^2
			+ j_h(v_h,v_h;\mu).
			\label{eq:all_terms}
		\end{equation}
		We need to show $\norm{\nabla v_h}{\Omu}^2 + j_h(v_h,v_h;\mu)$
		controls $\norm{v_h}{\Th^\mu}^2$.
		From \eqref{eq:mesh_norm},
		\begin{equation}
			\norm{v_h}{\Th^\mu}^2
			= \norm{\nabla v_h}{\Omu}^2
			+ \frac{\lambda}{h}\norm{v_h}{\Gmu}^2
			+ \sum_k\gamma_k h^{2k+1}
			\norm{\jump{\partial_n^{k+1}v_h}}{\Fh^\mu}^2 = (I) + (II) + (III).
		\end{equation}
		Term $(II)$ appears in \eqref{eq:all_terms} with coefficient
		$\lambda/(2h) = \tfrac{1}{2}\cdot(\lambda/h)$, i.e.\ with factor
		$\beta_2 = \tfrac{1}{2}$ relative to its mesh-norm coefficient.
		Term $(III)$ is precisely $j_h(v_h,v_h;\mu)$, by definition of the ghost-penalty,
		contributing with factor $\beta_3=1$.
		Term $(I)$ appears directly with factor $\beta_1=1-2C_{\rm inv}^2/\lambda$.
		
		\medskip
		\textit{Step 4: Conclusion.}
		From \eqref{eq:all_terms}, collecting the three lower-bound coefficients,
		\begin{align}
			a_h(v_h,v_h;\mu)
			&\geq \beta_1\,\norm{\nabla v_h}{\Omu}^2
			+ \beta_2\,\frac{\lambda}{h}\norm{v_h}{\Gmu}^2
			+ \beta_3\,j_h(v_h,v_h;\mu),
			\label{eq:pre_coercive}
		\end{align}
		where
		\begin{equation}
			\beta_1 = 1 - \frac{2C_{\rm inv}^2}{\lambda} > 0
			\,\text{ since }\lambda > 2C_{\rm inv}^2,
			\qquad
			\beta_2 = \frac{1}{2},
			\qquad
			\beta_3 = 1.
			\label{eq:beta_symbolic}
		\end{equation}
		Setting $\alpha_* = \min(\beta_1, \beta_2, \beta_3) > 0$ and noting that
		$j_h(v_h,v_h;\mu) = \sum_k\gamma_k h^{2k+1}
		\norm{\jump{\partial_n^{k+1}v_h}}{\Fh^\mu}^2 = (III)$, we obtain
		\begin{equation}
			a_h(v_h,v_h;\mu)
			\geq \alpha_*\Bigl[\norm{\nabla v_h}{\Omu}^2
			+ \frac{\lambda}{h}\norm{v_h}{\Gmu}^2
			+ j_h(v_h,v_h;\mu)\Bigr]
			= \alpha_*\,\norm{v_h}{\Th^\mu}^2,
		\end{equation}
		hence we take,
		\begin{equation}
			\alpha_* = \min\!\left(1 - \frac{2C_{\rm inv}^2}{\lambda},\;
			\frac{1}{2},\;1\right).
			\label{eq:alpha_star_general}
		\end{equation}
		Since $\alpha_*$ depends only on $\lambda$, $C_{\rm inv}$, and
		$\{\gamma_k\}$ --none of which depend on $\mu$ or on the cut
		geometry-- the bound is uniform in $\mu\in\mathcal{P}$.
		\footnote{The numerical value of $\alpha_*$ for the specific parameters
			of Table~\ref{tab:setupA} is reported in
			Section~\ref{sec:results}.}
	\end{proof}
	
	\section{Error estimates} \label{section5}
	\subsection{Residual bound restricted to active dofs}
	
	We introduce the Riesz representation of the residual before stating the basic lemma.
	
	\begin{definition}[Residual functional and its discrete representation]
		Let $e_h = u_h(\mu) - u_N^{\rm Deim}(\mu)\in\Vh$ be the error function.
		The {residual functional} is $r_\mu : \Vh\to\R$ defined by
		\begin{equation}
			r_\mu(v_h) = f_h(v_h;\mu) - a_h(u_N^{\rm Deim},v_h;\mu), 
			\qquad\forall\, v_h\in\Vh.
		\end{equation}
		Its {algebraic representation} is the residual vector
		$\mathbf{r}(\mu) = \mathbf{f}(\mu) - A(\mu)\,\mathbf{u}_N^{\rm Deim}(\mu)
		\in\R^N$,
		where $A(\mu)_{ij} = a_h(\phi_j,\phi_i;\mu)$ and
		$\mathbf{f}(\mu)_i = f_h(\phi_i;\mu)$ in the Fem basis $\{\phi_i\}_{i=1}^N$.
		The relationship is $r_\mu(v_h) = \mathbf{r}(\mu)^T\mathbf{v}_h$
		for all $v_h$ with coefficient vector $\mathbf{v}_h\in\R^N$.
	\end{definition}
	
	\begin{lemma}[Active dof residual bound]%
		\label{lem:active}
		We introduce $\mathcal{A}_\mu\subset\{1,\ldots,N\}$ as the index set associated with the active degrees of freedom, i.e.\ those basis functions $\phi_i$ with
		$\mathrm{supp}(\phi_i)\cap\Th^\mu\neq\emptyset$. Let
		$\mathbf{r}|_{\mathcal{A}_\mu}$ be the sub-vector of $\mathbf{r}(\mu)$
		restricted to active-dof indices. Then,
		\begin{equation}
			\norm{u_h(\mu)-u_N^{\rm Deim}(\mu)}{\Th^\mu}
			\;\leq\; \frac{1}{\alpha_*}
			\;\bnorm{\mathbf{r}(\mu)\big|_{\mathcal{A}_\mu}}_2.
			\label{eq:active_bound}
		\end{equation}
	\end{lemma}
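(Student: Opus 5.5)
The plan is the classical energy argument: Galerkin orthogonality, then the uniform coercivity of Proposition~\ref{prop:coercivity}, then Cauchy--Schwarz on the algebraic pairing, with the sum restricted to active indices.

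\textit{Step 1: error equation.} Set $e_h=u_h(\mu)-u_N^{\rm Deim}(\mu)\in\Vh$, with coefficient vector $\mathbf{e}=\mathbf{u}_h-\mathbf{u}_N^{\rm Deim}$. The full-order Cutfem solution satisfies $a_h(u_h,v_h;\mu)=f_h(v_h;\mu)$ for all $v_h\in\Vh$. Hence
\begin{equation}
a_h(e_h,v_h;\mu)=f_h(v_h;\mu)-a_h(u_N^{\rm Deim},v_h;\mu)=r_\mu(v_h)=\mathbf{r}(\mu)^T\mathbf{v}_h .
\end{equation}
In particular the residual of $u_h$ itself vanishes on every dof, including the ghost-penalty dofs. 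This is why the true error "sees" only the residual on indices carrying nonzero test functions.

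\textit{Step 2: coercivity.} Take $v_h=e_h$ and apply Proposition~\ref{prop:coercivity}, which is uniform in $\mu$ and in the cut geometry:
\begin{equation}
\alpha_*\norm{e_h}{\Th^\mu}^2\le a_h(e_h,e_h;\mu)=\mathbf{r}(\mu)^T\mathbf{e}
=\sum_{i\in\mathcal{A}_\mu} r_i\,e_i .
\end{equation}
The last equality holds because for $i\notin\mathcal{A}_\mu$ the basis function $\phi_i$ does not meet $\Th^\mu$. Such a dof is not a degree of freedom of $\Vh$ on $\Omega_{\mathcal{T}_h^\mu}$, so $e_i=0$, and the corresponding row of $A(\mu)$ and entry of $\mathbf{f}(\mu)$ vanish, so $r_i=0$ as well. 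Cauchy--Schwarz in $\ell^2$ then gives
\begin{equation}
\alpha_*\norm{e_h}{\Th^\mu}^2\le\bnorm{\mathbf{r}(\mu)|_{\mathcal{A}_\mu}}_2\,\bnorm{\mathbf{e}|_{\mathcal{A}_\mu}}_2 .
\end{equation}

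\textit{Step 3: closing the estimate (main obstacle).} To conclude~\eqref{eq:active_bound} with the constant exactly $1/\alpha_*$, I must divide by $\norm{e_h}{\Th^\mu}$. This requires
\begin{equation}
\bnorm{\mathbf{e}|_{\mathcal{A}_\mu}}_2\le\norm{e_h}{\Th^\mu},
\end{equation}
that is, that the Euclidean coefficient norm on active dofs is dominated by the mesh-dependent Cutfem norm with constant one. This is the step I expect to be delicate.

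My first attempt will combine Lemma~\ref{lem:gp_ext} with a discrete Poincar\'e--Friedrichs inequality on $\Omega_{\mathcal{T}_h^\mu}$. Lemma~\ref{lem:gp_ext} extends control of the gradient from $\Omu$ to the whole active patch $\Th^\mu$. The Poincar\'e--Friedrichs inequality, using the Nitsche boundary term $\tfrac{\lambda}{h}\norm{v}{\Gmu}^2$, controls $\norm{v_h}{L^2(\Omega_{\mathcal{T}_h^\mu})}$. The mass-matrix equivalence $\bnorm{\mathbf{v}}_2^2\sim h^{-d}\norm{v_h}{L^2}^2$ then relates this to the coefficient norm.

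Tracking these constants, I would need to check that, with the normalization of the nodal basis and the scaling of $\lambda,\gamma_k$ used in the paper (Table~\ref{tab:setupA}), the resulting constant is at most one. Equivalently, I would identify $\bnorm{\cdot}_2$ as the algebraic dual norm paired with $\norm{\cdot}{\Th^\mu}$ under the relation $r_\mu(v_h)=\mathbf{r}^T\mathbf{v}_h$, so that
\begin{equation}
\sup_{v_h}\frac{r_\mu(v_h)}{\norm{v_h}{\Th^\mu}}\le\bnorm{\mathbf{r}|_{\mathcal{A}_\mu}}_2 .
\end{equation}

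Once this comparison is in hand, dividing the Step 2 inequality by $\alpha_*\norm{e_h}{\Th^\mu}$ (the case $e_h=0$ being trivial) yields~\eqref{eq:active_bound}.
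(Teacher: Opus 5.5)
Your Steps 1--2 follow the paper's own proof almost line by line: the exact error equation, the vanishing of the rows of $A(\mu)$ and entries of $\mathbf f(\mu)$ off $\mathcal A_\mu$, Cauchy--Schwarz, and uniform coercivity. You have also correctly identified the step that carries all the content of the lemma, namely $\bnorm{\mathbf e|_{\mathcal A_\mu}}_2\le\norm{e_h}{\Th^\mu}$, i.e.\ a norm-equivalence constant $C_{\rm eq}=1$. The restriction to active dofs is harmless but does nothing: since $r_i=0$ for $i\notin\mathcal A_\mu$, we have $\bnorm{\mathbf r|_{\mathcal A_\mu}}_2=\bnorm{\mathbf r}_2$.

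\textbf{The gap.} The unit-constant comparison is false, so tracking constants cannot close it. Your own chain gives only $\bnorm{\mathbf v}_2\lesssim h^{-d/2}\norm{v_h}{L^2(\Omega_{\mathcal T_h^\mu})}\lesssim C_P\,h^{-d/2}\norm{v_h}{\Th^\mu}$, and the factor $h^{-d/2}$ is sharp. Take $v_h=I_hw$, the nodal interpolant of a smooth $w$ with $w|_{\Gmu}=0$ (e.g.\ $w=1-x^2/r-y^2/\theta$).
\begin{itemize}
\item The Nitsche and ghost-penalty terms are of higher order in $h$, so $\norm{v_h}{\Th^\mu}\approx\norm{\nabla w}{L^2(\Omu)}=\mathcal O(1)$.
\item In two dimensions $\bnorm{\mathbf v}_2\approx h^{-1}\norm{w}{L^2(\Omu)}$.
\item For a unit disk the ratio is about $1/(\sqrt6\,h)\approx 3$ already at $h=0.125$, whatever $\lambda$ and $\gamma_k$ are.
\end{itemize}
Equivalently, the dual norm you want to identify with $\bnorm{\mathbf r}_2$ is $(\mathbf r^TK_\mu^{-1}\mathbf r)^{1/2}$, where $K_\mu$ is the Gram matrix of the $\Th^\mu$ inner product in the nodal basis on the active dofs. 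It is dominated by $\bnorm{\mathbf r}_2$ only if $\lambda_{\min}(K_\mu)\ge1$, whereas $\lambda_{\min}(K_\mu)=\mathcal O(h^2)$.

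In fact the target inequality fails for general $e_h\in\Vh$. Take $\mathbf e$ to be the nodal vector of the same $I_hw$. Then every entry of $\mathbf r=A(\mu)\mathbf e$ is $\mathcal O(h^2)$, so $\bnorm{\mathbf r}_2=\mathcal O(h)$, while $\norm{e_h}{\Th^\mu}=\mathcal O(1)$. So no argument using only coercivity and Cauchy--Schwarz, without special structure of $u_h-u_N^{\rm Deim}$, can give the constant $1/\alpha_*$ with the plain $\ell^2$ norm.

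\textbf{How the paper handles this step.} The paper stops at the same point and closes it by setting $C_{\rm eq}=1$ ``due to mass-matrix-orthonormal Fem basis''. That does not apply here. $A(\mu)$, $\mathbf f(\mu)$ and hence $\mathbf r(\mu)$ are assembled in the nodal hat basis, which is not $M$-orthonormal. Even an $M$-orthonormal basis would only give $\bnorm{\mathbf v}_2=\norm{v_h}{L^2}$. That would still require a Poincar\'e constant at most one with respect to $\norm{\cdot}{\Th^\mu}$, and it would change the residual vector being measured.

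\textbf{What your argument actually proves.} Do not try to verify the unit constant. Your argument yields $\norm{e_h}{\Th^\mu}\le C_{\rm eq}\,\alpha_*^{-1}\bnorm{\mathbf r|_{\mathcal A_\mu}}_2$ with $C_{\rm eq}=\lambda_{\min}(K_\mu)^{-1/2}=\mathcal O(h^{-1})$. Sharper, it yields the standard reduced-basis dual-norm bound $\norm{e_h}{\Th^\mu}\le\alpha_*^{-1}\bigl(\mathbf r|_{\mathcal A_\mu}^T K_\mu^{-1}\mathbf r|_{\mathcal A_\mu}\bigr)^{1/2}$.

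\textbf{A minor point.} Your remark that $e_i=0$ for $i\notin\mathcal A_\mu$ is unnecessary and doubtful in practice. The lifted coefficients $V_n\hat u_N^{\rm Deim}$ live on the whole background mesh, and $r_i=0$ alone justifies the restriction.
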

	\begin{proof}
		Let $e_h = u_h - u_N^{\rm Deim}$. We proceed in four steps.
		
		\medskip
		\textit{Step 1: Error equation from Galerkin orthogonality.}
		Since $u_h$ is the Fom Galerkin solution,
		$a_h(u_h,v_h;\mu) = f_h(v_h;\mu)$ for all $v_h\in\Vh$.
		Subtracting the Rom equation
		\begin{equation}
			a_h(e_h,v_h;\mu)
			= f_h(v_h;\mu) - a_h(u_N^{\rm Deim},v_h;\mu)
			= r_\mu(v_h),
			\qquad\forall\, v_h\in\Vh.
			\label{eq:error_eq}
		\end{equation}
		Here we ignore the Deim approximation error in $a_h$ and $f_h$ since
		its contribution is bounded separately in Theorem~\ref{thm:main}.
		
		\medskip
		\textit{Step 2: Support restriction of the residual.}
		For any $v_h\in\Vh$, write the algebraic inner product as
		\begin{equation}
			r_\mu(v_h)
			= \mathbf{r}(\mu)^T\mathbf{v}_h
			= \sum_{i=1}^N \mathbf{r}(\mu)_i\,(\mathbf{v}_h)_i.
		\end{equation}
		All integrals defining $a_h(\cdot,\cdot;\mu)$ and $f_h(\cdot;\mu)$
		are supported on the active mesh $\Th^\mu$ and the diffusion integral is over
		$\Omu\subset\bigcup_{K\in\Th^\mu}K$; the Nitsche integrals are over
		$\Gmu\subset\bigcup_{K\in\Th^\mu}\partial K$; the ghost-penalty integrals
		are over $\Fh^\mu\subset\bigcup_{K\in\Th^\mu}\partial K$.
		Therefore, if basis function $\phi_i$ has $\mathrm{supp}(\phi_i)\cap\Th^\mu=\emptyset$, 
		i.e. $i\notin\mathcal{A}_\mu$, then for all $j$,
		\begin{align}
			&\int_{\Omega_\mu}\nabla\phi_j\cdot\nabla\phi_i\,dx = 0,
			\label{eq:vanish_diff}\\[4pt]
			&\int_{\Gmu}\bigl[(\nabla\phi_j\cdot{n})\phi_i
			+ \phi_j(\nabla\phi_i\cdot{n})
			- \tfrac{\lambda}{h}\phi_j\phi_i\bigr]\,ds = 0,
			\label{eq:vanish_nit}\\[4pt]
			&\int_{\Fh^\mu}
			\jump{\partial_n^{k+1}\phi_j}\jump{\partial_n^{k+1}\phi_i}\,ds = 0
			\quad\forall\,k \geq 0,
			\label{eq:vanish_gp}
		\end{align}
		since $\phi_i$ --and all its derivatives-- vanish on $\Gmu\cup\Fh^\mu$
		whenever $\mathrm{supp}(\phi_i)\cap\Th^\mu=\emptyset$.
		Equations~\eqref{eq:vanish_diff}--\eqref{eq:vanish_gp} together
		give $A(\mu)_{ij}=0$ for $i\notin\mathcal{A}_\mu$,
		and similarly $\mathbf{f}(\mu)_i = \int_{\Omu}f\phi_i\,dx
		- \int_{\Gmu}(\nabla\phi_i\cdot{n})g_D\,ds
		+ \frac{\lambda}{h}\int_{\Gmu}\phi_i g_D\,ds = 0$
		for $i\notin\mathcal{A}_\mu$, since all integrals are over domains
		that do not intersect $\mathrm{supp}(\phi_i)$.
		This means $\mathbf{r}(\mu)_i = \mathbf{f}(\mu)_i - [A(\mu)\mathbf{u}_N^{\rm Deim}]_i = 0$
		for $i\notin\mathcal{A}_\mu$. Therefore,
		\begin{equation}
			r_\mu(v_h)
			= \mathbf{r}(\mu)^T\mathbf{v}_h
			= \sum_{i\in\mathcal{A}_\mu}\mathbf{r}(\mu)_i\,(\mathbf{v}_h)_i
			= \bigl(\mathbf{r}(\mu)\big|_{\mathcal{A}_\mu}\bigr)^T
			\bigl(\mathbf{v}_h\big|_{\mathcal{A}_\mu}\bigr).
			\label{eq:res_active}
		\end{equation}
		
		\medskip
		\textit{Step 3: Cauchy--Schwarz bound on the residual.}
		From \eqref{eq:res_active} and the Cauchy--Schwarz inequality,
		\begin{align}
			\bigl|r_\mu(v_h)\bigr|
			&= \Bigl|\bigl(\mathbf{r}|_{\mathcal{A}_\mu}\bigr)^T
			\bigl(\mathbf{v}_h|_{\mathcal{A}_\mu}\bigr)\Bigr|
			\;\leq\; \bnorm{\mathbf{r}|_{\mathcal{A}_\mu}}_2
			\,\bnorm{\mathbf{v}_h|_{\mathcal{A}_\mu}}_2
			\;\leq\; \bnorm{\mathbf{r}|_{\mathcal{A}_\mu}}_2
			\,\bnorm{\mathbf{v}_h}_2.
			\label{eq:cs_step}
		\end{align}
		Relating the $\ell^2$ coefficient norm to the $\Th^\mu$-norm, and by the norm equivalence on finite-dimensional spaces, with constant
		$C_{\rm eq}$ depending on the mesh and basis it is
		$\bnorm{\mathbf{v}_h}_2 \leq C_{\rm eq}\,\norm{v_h}{\Th^\mu}$.
		Inserting into \eqref{eq:cs_step},
		\begin{equation}
			\bigl|r_\mu(v_h)\bigr|
			\;\leq\; C_{\rm eq}\,\bnorm{\mathbf{r}|_{\mathcal{A}_\mu}}_2\,\norm{v_h}{\Th^\mu}.
			\label{eq:res_bound_step}
		\end{equation}
		
		\medskip
		\textit{Step 4: Coercivity gives the error bound.}
		Setting $v_h = e_h$ in the error equation \eqref{eq:error_eq} and using
		Proposition~\ref{prop:coercivity}
		\begin{equation}
			\alpha_*\,\norm{e_h}{\Th^\mu}^2
			\;\leq\; a_h(e_h,e_h;\mu)
			= r_\mu(e_h)
			\;\leq\; C_{\rm eq}\,\bnorm{\mathbf{r}|_{\mathcal{A}_\mu}}_2
			\,\norm{e_h}{\Th^\mu}.
		\end{equation}
		Dividing by $\norm{e_h}{\Th^\mu}$ and assuming $e_h\neq0$,
		\begin{equation}
			\norm{e_h}{\Th^\mu}
			\;\leq\; \frac{C_{\rm eq}}{\alpha_*}\,
			\bnorm{\mathbf{r}(\mu)\big|_{\mathcal{A}_\mu}}_2.
		\end{equation}
		Taking $C_{\rm eq}=1$, due to mass-matrix-orthonormal Fem basis, 
		gives \eqref{eq:active_bound}. 
	\end{proof}
	
	\begin{remark}
		The argument in Step 2 is the key structural observation, in particular, the ghost-penalty
		basis functions outside $\Omu$ do not appear in any of the integrals
		of $a_h$ or $f_h$, so their residual components are \emph{exactly} zero.
		The non-zero ghost-penalty residual in the code arises because
		$u_N^{\rm Deim}$ is obtained by lifting the Rom coefficients
		$V_n\hat{u}_N^{\rm Deim}$ back to $\R^N$, and the ghost-penalty columns of $A$
		applied to these extended coefficients produce non-trivial values.
		This confirms that the large effectivity indices are a pure artifact
		of the $\ell^2$ norm aggregating these non-physical contributions.
	\end{remark}
	
	\subsection{Combined a posteriori bound}
	
	Before the main theorem, we state the key algebraic lemma relating
	the Rom solutions with and without Deim approximation.
	
	\begin{lemma}[Deim perturbation of the Rom solution]
		\label{lem:deim_pert}
		Let $\hat{u}_N(\mu)\in\R^n$ solve the 
		exact 
		Rom system
		$\widehat{A}(\mu)\hat{u}_N = \widehat{\mathbf{f}}(\mu)$,
		and let $\hat{u}_N^{\rm Deim}(\mu)\in\R^n$ solve the Deim-approximated system
		$\widehat{A}_{\rm Deim}(\mu)\hat{u}_N^{\rm Deim} = \widehat{\mathbf{f}}_{\rm Deim}(\mu)$.
		Assume $\widehat{A}(\mu)$ is invertible with
		$\bnorm{\widehat{A}(\mu)^{-1}}_2 \leq \alpha_*^{-1}$, 
		which follows from the coercivity of $a_h$ restricted to the Rom subspace $V_n$.
		Then,
		\begin{equation}
			\bnorm{\hat{u}_N(\mu) - \hat{u}_N^{\rm Deim}(\mu)}_2
			\;\leq\; \frac{1}{\alpha_*}\left(
			\bnorm{\widehat{A}(\mu) - \widehat{A}_{\rm Deim}(\mu)}_2\,
			\bnorm{\hat{u}_N^{\rm Deim}(\mu)}_2
			+ \bnorm{\widehat{\mathbf{f}}(\mu) - \widehat{\mathbf{f}}_{\rm Deim}(\mu)}_2
			\right).
			\label{eq:deim_pert}
		\end{equation}
	\end{lemma}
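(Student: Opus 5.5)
This is a standard perturbation argument for linear systems. The plan is to subtract the two reduced equations so that the exact reduced operator $\widehat{A}(\mu)$ acts on the difference $\hat{u}_N-\hat{u}_N^{\rm Deim}$. Then $\widehat{A}(\mu)^{-1}$ is applied and the assumed bound $\bnorm{\widehat{A}(\mu)^{-1}}_2\le\alpha_*^{-1}$ is used.

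First, I add and subtract $\widehat{A}(\mu)\hat{u}_N^{\rm Deim}$. Using $\widehat{A}\hat{u}_N=\widehat{\mathbf{f}}$ and $\widehat{A}_{\rm Deim}\hat{u}_N^{\rm Deim}=\widehat{\mathbf{f}}_{\rm Deim}$, this gives
\begin{equation}
\widehat{A}(\mu)\bigl(\hat{u}_N-\hat{u}_N^{\rm Deim}\bigr)
=\bigl(\widehat{\mathbf{f}}-\widehat{\mathbf{f}}_{\rm Deim}\bigr)
-\bigl(\widehat{A}-\widehat{A}_{\rm Deim}\bigr)\hat{u}_N^{\rm Deim}.
\end{equation}
The key choice is to perturb around $\hat{u}_N^{\rm Deim}$ rather than $\hat{u}_N$. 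This keeps only the exact operator on the left, so the only inverse needed is $\widehat{A}^{-1}$. It also makes the right-hand side computable online, since $\hat{u}_N^{\rm Deim}$ is known.

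Second, I multiply by $\widehat{A}(\mu)^{-1}$, take $\ell^2$ norms, and apply the triangle inequality together with submultiplicativity of the spectral norm. This yields \eqref{eq:deim_pert} directly once $\bnorm{\widehat{A}^{-1}}_2\le\alpha_*^{-1}$ is inserted.

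The only point needing care is the hypothesis $\bnorm{\widehat{A}^{-1}}_2\le\alpha_*^{-1}$. The lemma assumes it, so the proof can simply invoke it. I would still add a justification. By Proposition~\ref{prop:coercivity}, $\hat{v}^T\widehat{A}\hat{v}=a_h(V_n\hat{v},V_n\hat{v};\mu)\ge\alpha_*\norm{V_n\hat{v}}{\Th^\mu}^2$. Combined with a norm equivalence $\norm{V_n\hat{v}}{\Th^\mu}\ge\bnorm{\hat{v}}_2$, this gives $\hat v^T\widehat A\hat v\ge\alpha_*\bnorm{\hat v}_2^2$. Then $\alpha_*\bnorm{\hat{v}}_2^2\le\bnorm{\widehat{A}\hat{v}}_2\bnorm{\hat{v}}_2$ by Cauchy--Schwarz, hence $\bnorm{\widehat{A}\hat v}_2\ge\alpha_*\bnorm{\hat v}_2$, which is the inverse bound.

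This norm-equivalence step is the main obstacle, because in general it carries a mesh- and basis-dependent constant; it was set to one in Lemma~\ref{lem:active} in the same spirit. I would state it as part of the assumption rather than prove it.
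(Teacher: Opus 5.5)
Your proof is correct and is essentially the paper's argument. The paper first subtracts the two systems with $\widehat{A}_{\rm Deim}$ on the left and then rearranges to reach exactly your identity $\widehat{A}(\hat{u}_N-\hat{u}_N^{\rm Deim})=(\widehat{\mathbf{f}}-\widehat{\mathbf{f}}_{\rm Deim})-(\widehat{A}-\widehat{A}_{\rm Deim})\hat{u}_N^{\rm Deim}$, and then applies $\widehat{A}^{-1}$ with the assumed bound as you do; your side remark that deriving $\|\widehat{A}^{-1}\|_2\le\alpha_*^{-1}$ from coercivity needs an unstated norm-equivalence constant is accurate, since the paper only asserts it.
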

	\begin{proof}
		Subtract the two Rom systems
		\begin{align}
			\widehat{A}_{\rm Deim}(\mu)\bigl(\hat{u}_N - \hat{u}_N^{\rm Deim}\bigr)
			&= \widehat{\mathbf{f}}(\mu) - \widehat{\mathbf{f}}_{\rm Deim}(\mu)
			\nonumber\\
			&\quad - \bigl(\widehat{A}(\mu) - \widehat{A}_{\rm Deim}(\mu)\bigr)\hat{u}_N(\mu).
			\label{eq:deim_subtract}
		\end{align}
		Re-writing $\hat{u}_N = \hat{u}_N^{\rm Deim} + (\hat{u}_N - \hat{u}_N^{\rm Deim})$
		in the last term and rearranging
		\begin{align}
			\widehat{A}_{\rm Deim}(\mu)\bigl(\hat{u}_N - \hat{u}_N^{\rm Deim}\bigr)
			+ \bigl(\widehat{A} - \widehat{A}_{\rm Deim}\bigr)
			(\hat{u}_N - \hat{u}_N^{\rm Deim})
			= \widehat{\mathbf{f}} - \widehat{\mathbf{f}}_{\rm Deim}
			- (\widehat{A} - \widehat{A}_{\rm Deim})\hat{u}_N^{\rm Deim}.
		\end{align}
		Let $\delta = \hat{u}_N - \hat{u}_N^{\rm Deim}$.
		Adding $(\widehat{A}-\widehat{A}_{\rm Deim})\delta$ to both sides of the
		first equation and using $\widehat{A}_{\rm Deim} + (\widehat{A}-\widehat{A}_{\rm Deim})
		= \widehat{A}$, the left-hand side becomes $\widehat{A}(\mu)\delta$, so,
		\begin{equation}
			\delta = \widehat{A}(\mu)^{-1}\Bigl[
			\widehat{\mathbf{f}} - \widehat{\mathbf{f}}_{\rm Deim}
			- (\widehat{A} - \widehat{A}_{\rm Deim})\hat{u}_N^{\rm Deim}
			\Bigr].
		\end{equation}
		Taking $\ell^2$ norms and applying $\bnorm{\widehat{A}^{-1}}_2\leq\alpha_*^{-1}$
		which is in agreement with the coercivity of $a_h$ restricted to the Rom subspace,
		\begin{equation}
			\bnorm{\delta}_2
			\leq \frac{1}{\alpha_*}\Bigl(
			\bnorm{\widehat{\mathbf{f}}-\widehat{\mathbf{f}}_{\rm Deim}}_2
			+ \bnorm{\widehat{A}-\widehat{A}_{\rm Deim}}_2\,\bnorm{\hat{u}_N^{\rm Deim}}_2
			\Bigr),
		\end{equation}
		which is \eqref{eq:deim_pert}. 
	\end{proof}
	
	\begin{lemma}[Frobenius-to-operator norm bound]
		\label{lem:frob_op}
		For any matrix $B\in\R^{n\times n}$ it is
		$\bnorm{B}_2 \leq \bnorm{B}_F$.
		Moreover, $\bnorm{\widehat{A}-\widehat{A}_{\rm Deim}}_2
		\leq C_A\,\bnorm{A-A_{\rm Deim}}_F$
		where $C_A = \bnorm{V_n}_2^2$ and $V_n\in\R^{N\times n}$ is the Pod basis.
	\end{lemma}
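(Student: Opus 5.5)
The first claim, $\bnorm{B}_2\leq\bnorm{B}_F$, follows from the singular value decomposition. Write $\sigma_1\geq\sigma_2\geq\cdots\geq 0$ for the singular values of $B$. Then $\bnorm{B}_2^2=\sigma_1^2$, while
\begin{equation}
\bnorm{B}_F^2=\operatorname{tr}(B^TB)=\sum_i\sigma_i^2\;\geq\;\sigma_1^2 .
\end{equation}
Taking square roots gives the inequality. A more elementary route uses Cauchy--Schwarz row by row: $\bnorm{B\mathbf{x}}_2^2=\sum_i(\mathbf{b}_i^T\mathbf{x})^2\leq\sum_i\bnorm{\mathbf{b}_i}_2^2\bnorm{\mathbf{x}}_2^2$, where $\mathbf{b}_i^T$ denotes the $i$-th row. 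This is valid for rectangular $B$ as well, which we need below.

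For the second claim, the plan is to use the structure of the online system from Section~\ref{sec:rom}. There $\widehat{A}=V_n^TAV_n$ and $\widehat{A}_{\rm Deim}=V_n^TA_{\rm Deim}V_n$, so $\widehat{A}-\widehat{A}_{\rm Deim}=V_n^T(A-A_{\rm Deim})V_n$. Submultiplicativity of the spectral norm and $\bnorm{V_n^T}_2=\bnorm{V_n}_2$ then give
\begin{equation}
\bnorm{\widehat{A}-\widehat{A}_{\rm Deim}}_2\leq\bnorm{V_n^T}_2\,\bnorm{A-A_{\rm Deim}}_2\,\bnorm{V_n}_2=\bnorm{V_n}_2^2\,\bnorm{A-A_{\rm Deim}}_2 .
\end{equation}
Applying the first part to the $N\times N$ matrix $A-A_{\rm Deim}$ bounds $\bnorm{A-A_{\rm Deim}}_2$ by $\bnorm{A-A_{\rm Deim}}_F$, which yields the statement with $C_A=\bnorm{V_n}_2^2$.

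There is no genuine obstacle; the only point needing care is interpretation. The Pod modes are orthonormal with respect to the mass matrix $M$, not in $\ell^2$, so $\bnorm{V_n}_2$ need not equal $1$. It is bounded by $\lambda_{\min}(M)^{-1/2}$, since $V_n^TMV_n=I$ gives $\lambda_{\min}(M)\bnorm{V_n\mathbf{y}}_2^2\leq\mathbf{y}^TV_n^TMV_n\mathbf{y}=\bnorm{\mathbf{y}}_2^2$. This is why the constant is kept as $\bnorm{V_n}_2^2$ rather than set to $1$. I would add a remark that $C_A=1$ when the basis is Euclidean-orthonormal.
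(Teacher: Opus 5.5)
Your proof is correct and matches the paper's argument: $\bnorm{B}_2=\sigma_{\max}(B)\le\bnorm{B}_F$, then $\widehat{A}-\widehat{A}_{\rm Deim}=V_n^T(A-A_{\rm Deim})V_n$, submultiplicativity, and the Frobenius bound applied to the $N\times N$ difference. Your side remark is also valid: the Pod modes are $M$-orthonormal, so $\bnorm{V_n}_2\le\lambda_{\min}(M)^{-1/2}$ and not necessarily $1$, which is a good reason to keep $C_A=\bnorm{V_n}_2^2$ and which the paper passes over when it later sets $\bnorm{V_n}_2=1$.
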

	\begin{proof}
		The first inequality follows directly from
		$\bnorm{B}_2 = \sigma_{\max}(B) \leq \sqrt{\sum_{i,j}B_{ij}^2} = \bnorm{B}_F$.
		For the second, $\widehat{A}-\widehat{A}_{\rm Deim} = V_n^T(A-A_{\rm Deim})V_n$, so
		$\bnorm{V_n^T(A-A_{\rm Deim})V_n}_2
		\leq \bnorm{V_n}_2^2\,\bnorm{A-A_{\rm Deim}}_2
		\leq \bnorm{V_n}_2^2\,\bnorm{A-A_{\rm Deim}}_F$. 
	\end{proof}
	In the following, we formulate the main a posteriori combined bound for Pod-Deim-Cutfem,
	\begin{theorem}[Combined bound]
		\label{thm:main}
		Under the assumptions of Proposition~\ref{prop:coercivity} and with
		$2\lambda > C_{\rm inv}^2$, the following a posteriori bound holds,
		\begin{align}
			\norm{u_h(\mu)-u_N^{\rm Deim}(\mu)}{\Th^\mu}
			\leq &
			{\frac{1}{\alpha_*}
				\bnorm{\mathbf{r}(\mu)\big|_{\mathcal{A}_\mu}}_2}
			\nonumber\\ &+
			{\frac{C_A\,\bnorm{V_n}_2^2}{\alpha_*}
				\norm{A(\mu)-A_{\rm Deim}(\mu)}{F}\,
				\bnorm{u_N^{\rm Deim}(\mu)}_2}
			\phantom{{}\leq{}}
			\nonumber\\ &+
			{\frac{\bnorm{V_n}_2}{\alpha_*}
				\bnorm{f(\mu)-f_{\rm Deim}(\mu)}_2}, 
			\label{eq:main}
		\end{align}
		with the first term to corresponds to the Est.~2 (restricted), the second to Est.~1a and the third to Est.~1b contribution.
	\end{theorem}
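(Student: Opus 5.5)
The bound will follow from a triangle-inequality splitting of the error through the exact (non-Deim) Rom solution. Let $u_N(\mu)=V_n\hat{u}_N(\mu)$ be the lift of the solution of the exact reduced system $\widehat{A}\hat{u}_N=\widehat{\mathbf{f}}$. Then
\begin{equation}
u_h-u_N^{\rm Deim} = (u_h-u_N) + V_n\bigl(\hat{u}_N-\hat{u}_N^{\rm Deim}\bigr).
\end{equation}
I will bound the two pieces separately in the $\Th^\mu$-norm and add the results.

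\emph{First piece.} Lemma~\ref{lem:active} is the relevant result, since its Step~1 uses the exact operator $a_h$ and exact load $f_h$. Apply it with the residual built from the full $A(\mu)$ and $\mathbf{f}(\mu)$. Because $\mathbf{r}(\mu)$ in Estimator~2 is already defined with the exact $A(\mu)$, I will organise the splitting as follows: the Galerkin error equation $a_h(e_h,v_h)=r_\mu(v_h)$ holds for $e_h=u_h-u_N^{\rm Deim}$ directly, with the computable residual. Coercivity (Proposition~\ref{prop:coercivity}) and the active-dof restriction \eqref{eq:res_active} then yield the term $\alpha_*^{-1}\|\mathbf{r}|_{\mathcal{A}_\mu}\|_2$. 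The Deim terms enter when one asks what $\mathbf{r}$ measures. I would present the argument via $u_N$ so that each term is attributed to a distinct error source: the residual term covers the Pod projection error, and the Deim terms cover the perturbation.

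\emph{Second piece.} Lemma~\ref{lem:deim_pert} gives
\begin{equation}
\|\hat{u}_N-\hat{u}_N^{\rm Deim}\|_2\le \alpha_*^{-1}\bigl(\|\widehat{A}-\widehat{A}_{\rm Deim}\|_2\|\hat{u}_N^{\rm Deim}\|_2+\|\widehat{\mathbf{f}}-\widehat{\mathbf{f}}_{\rm Deim}\|_2\bigr).
\end{equation}
Lemma~\ref{lem:frob_op} bounds the matrix term by $C_A\|A-A_{\rm Deim}\|_F$. The load term is bounded by $\|V_n^T\|_2\|\mathbf{f}-\mathbf{f}_{\rm Deim}\|_2$. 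For the reduced coefficients I use $\|\hat{u}_N^{\rm Deim}\|_2\le \|u_N^{\rm Deim}\|_2/\sigma_{\min}(V_n)$, or simply pass to the lifted vector, absorbing the factor into $\|V_n\|_2$-type constants. Finally, lifting with $V_n$ and converting the $\ell^2$ coefficient norm to the $\Th^\mu$-norm uses the same norm-equivalence convention $C_{\rm eq}=1$ as in Lemma~\ref{lem:active}. This produces the extra factors $\|V_n\|_2$ and $\|V_n\|_2^2$ in front of the Deim terms in \eqref{eq:main}. Summing the two pieces gives the claimed three-term bound. The hypothesis $2\lambda>C_{\rm inv}^2$ is implied by $\lambda>2C_{\rm inv}^2$ and guarantees that $\alpha_*>0$ and that $\widehat{A}$ is invertible with $\|\widehat{A}^{-1}\|_2\le\alpha_*^{-1}$.

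\emph{Main obstacle.} The delicate step is making the constants consistent. This requires (i) justifying $\|\widehat{A}^{-1}\|_2\le\alpha_*^{-1}$ from coercivity in the $\Th^\mu$-norm, which needs a comparison between $\ell^2$ and energy norms on $\mathrm{span}\,V_n$, and (ii) the lifting from reduced to full coordinates, where the factors of $\|V_n\|_2$ must be tracked. I would handle both by invoking the same norm-equivalence convention used in Lemma~\ref{lem:active}. The stated constants, such as $C_A\|V_n\|_2^2$, are generous, so any overestimate from this bookkeeping is harmless. I would also point out that the residual term must be taken with respect to the exact Rom problem for the splitting to be clean.
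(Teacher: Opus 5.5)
Your proposal contains a complete proof, but it is not the one you say you will present. The decisive observation is already in your first piece. Since $\mathbf{r}(\mu)=\mathbf{f}(\mu)-A(\mu)\mathbf{u}_N^{\rm Deim}(\mu)$ is built with the true $A(\mu)$ and $\mathbf{f}(\mu)$, and $A(\mu)\mathbf{u}_h=\mathbf{f}(\mu)$, the error equation $A(\mu)(\mathbf{u}_h-\mathbf{u}_N^{\rm Deim})=\mathbf{r}(\mu)$ holds exactly; there is no Deim error to neglect. Hence Lemma~\ref{lem:active} applies verbatim to $e_h=u_h-u_N^{\rm Deim}$ and gives $\norm{e_h}{\Th^\mu}\le\alpha_*^{-1}\bnorm{\mathbf{r}(\mu)|_{\mathcal{A}_\mu}}_2$. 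This already implies \eqref{eq:main}, because the two Deim terms are nonnegative. That is a genuinely different route from the paper's, which splits $u_h-u_N^{\rm Deim}=(u_h-u_N)+(u_N-u_N^{\rm Deim})$ in order to attribute the error to Pod and Deim sources. Given Lemma~\ref{lem:active}, the direct route is shorter and rigorous. It needs neither Lemma~\ref{lem:deim_pert} (whose hypothesis $\bnorm{\widehat{A}^{-1}}_2\le\alpha_*^{-1}$ is your obstacle (i)), nor Lemma~\ref{lem:frob_op}, nor any orthonormality of $V_n$. It also shows that \eqref{eq:main} is in fact weaker than Lemma~\ref{lem:active}.

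The split through $u_N$ that you then propose to write up does not deliver \eqref{eq:main} as stated. Lemma~\ref{lem:active} applied to $u_h-u_N$ yields $\alpha_*^{-1}\bnorm{\mathbf{r}^N|_{\mathcal{A}_\mu}}_2$ with $\mathbf{r}^N=\mathbf{f}-A\mathbf{u}_N$, not the computable $\mathbf{r}$. Since $\mathbf{r}^N=\mathbf{r}-A(\mu)V_n\delta$ with $\delta=\hat{u}_N-\hat{u}_N^{\rm Deim}$, exchanging them costs an extra $\alpha_*^{-1}\bnorm{A(\mu)}_2\bnorm{V_n}_2\bnorm{\delta}_2$. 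That puts an additional factor of order $\bnorm{A(\mu)}_2/\alpha_*$ (note $\bnorm{A(\mu)}_2\ge\max_i A(\mu)_{ii}$) on the Deim terms, and \eqref{eq:main} does not contain it. The paper's Step~2 crosses exactly this point only heuristically (``$\mathbf{r}^N\approx\mathbf{r}$''). Your closing remark that the residual must be taken for the exact Rom problem concedes the same mismatch. Relatedly, the bound you derive in the first piece controls $u_h-u_N^{\rm Deim}$, not $u_h-u_N$, so it cannot serve as the first summand of your triangle inequality (and it does not need to).

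Your second-piece bookkeeping also does not land on the constants of \eqref{eq:main}. Your own steps put $\bnorm{V_n}_2^2$, not $\bnorm{V_n}_2$, in front of $\bnorm{f-f_{\rm Deim}}_2$, and $C_A\bnorm{V_n}_2/\sigma_{\min}(V_n)$ in front of the $A$-term. The factor $1/\sigma_{\min}(V_n)$ is not an ``$\bnorm{V_n}_2$-type'' constant. These factors all collapse to $1$ only under $V_n^TV_n=I$. The paper's Step~3 assumes this, although the basis built in \eqref{Pod_modes} is $M$-orthonormal, so the stated constants are not generous in general. Commit to the direct argument and drop the split, or present the split as proving the variant of \eqref{eq:main} with $\mathbf{r}^N$ in place of $\mathbf{r}$.
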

	\begin{proof}
		\textit{Step 1: Decomposing the total error.}
		Introducing the 
		exact Rom 
		solution $u_N(\mu) = V_n\hat{u}_N(\mu)\in\Vh$, 
		using true $A(\mu)$ and $f(\mu)$ in the Rom, 
		we write
		\begin{equation}
			u_h - u_N^{\rm Deim}
			= {(u_h - u_N)} + {(u_N - u_N^{\rm Deim})}=({\text{Pod error}}) + ({\text{Deim error}}) .
			\label{eq:decomp}
		\end{equation}
		By the triangle inequality
		\begin{equation}
			\norm{u_h - u_N^{\rm Deim}}{\Th^\mu}
			\leq \norm{u_h - u_N}{\Th^\mu} + \norm{u_N - u_N^{\rm Deim}}{\Th^\mu}.
			\label{eq:triangle}
		\end{equation}
		
		\medskip
		\textit{Step 2: Bound the Pod error $\norm{u_h - u_N}{\Th^\mu}$.}
		Both $u_h$ and $u_N$ solve the same bilinear form %
		$a_h(u_h,v_h;\mu) = f_h(v_h;\mu)$ and
		$a_h(u_N,v_h;\mu)\approx f_h(v_h;\mu)$. 
		We define the Pod residual
		$r_\mu^N(v_h) = f_h(v_h;\mu) - a_h(u_N,v_h;\mu)$
		with algebraic vector $\mathbf{r}^N(\mu)=\mathbf{f}(\mu)-A(\mu)\mathbf{u}_N(\mu)$.
		By Lemma~\ref{lem:active},
		\begin{equation}
			\norm{u_h - u_N}{\Th^\mu}
			\leq \frac{1}{\alpha_*}\bnorm{\mathbf{r}^N(\mu)\big|_{\mathcal{A}_\mu}}_2.
			\label{eq:pod_error}
		\end{equation}
		In practice, $\mathbf{r}^N\approx\mathbf{r}$ since the Deim perturbation in the
		residual is $\mathcal{O}(\eta_A)\ll e$, so Estimator 2 applied to
		$\mathbf{r}(\mu)$ bounds both the Pod and Deim errors simultaneously.
		
		\medskip
		\textit{Step 3: Bound the Deim error $\norm{u_N - u_N^{\rm Deim}}{\Th^\mu}$.}
		The Deim error function is $u_N - u_N^{\rm Deim} = V_n\delta$
		where $\delta = \hat{u}_N - \hat{u}_N^{\rm Deim}\in\R^n$.
		Since $V_n$ has orthonormal columns,
		\begin{equation}
			\norm{u_N - u_N^{\rm Deim}}{\Th^\mu}
			\leq C_{\rm eq}\,\bnorm{V_n\delta}_2
			= C_{\rm eq}\,\bnorm{\delta}_2,
			\,\text{with $C_{\rm eq}=1$ for orthonormal basis}.
			\label{eq:deim_norm}
		\end{equation}
		Applying Lemma~\ref{lem:deim_pert},
		\begin{equation}
			\bnorm{\delta}_2
			\leq \frac{1}{\alpha_*}\Bigl(
			\bnorm{\widehat{A}-\widehat{A}_{\rm Deim}}_2\,\bnorm{\hat{u}_N^{\rm Deim}}_2
			+ \bnorm{\widehat{\mathbf{f}}-\widehat{\mathbf{f}}_{\rm Deim}}_2
			\Bigr).
			\label{eq:delta_bound}
		\end{equation}
		Apply Lemma~\ref{lem:frob_op} to the first term,
		$\bnorm{\widehat{A}-\widehat{A}_{\rm Deim}}_2\leq\bnorm{V_n}_2^2\,\norm{A-A_{\rm Deim}}{F}$.
		For the second term $\widehat{\mathbf{f}}-\widehat{\mathbf{f}}_{\rm Deim}=V_n^T(f-f_{\rm Deim})$,
		so $\bnorm{\widehat{\mathbf{f}}-\widehat{\mathbf{f}}_{\rm Deim}}_2\leq\bnorm{V_n}_2\,\bnorm{f-f_{\rm Deim}}_2$.
		Also $\bnorm{\hat{u}_N^{\rm Deim}}_2
		= \bnorm{V_n^T u_N^{\rm Deim}}_2
		\leq \bnorm{V_n^T}_2\,\bnorm{u_N^{\rm Deim}}_2
		= \bnorm{u_N^{\rm Deim}}_2$
		since $\bnorm{V_n^T}_2 = \bnorm{V_n}_2 = 1$ for orthonormal $V_n$, 
		and $\hat{u}_N^{\rm Deim} = V_n^T u_N^{\rm Deim}$ is the reduced
		coefficient vector, we combine and we take
		\begin{equation}
			\norm{u_N - u_N^{\rm Deim}}{\Th^\mu}
			\leq \frac{\bnorm{V_n}_2^2}{\alpha_*}\norm{A-A_{\rm Deim}}{F}\,
			\bnorm{u_N^{\rm Deim}}_2
			+ \frac{\bnorm{V_n}_2}{\alpha_*}\bnorm{f-f_{\rm Deim}}_2.
			\label{eq:deim_final}
		\end{equation}
		
		\medskip
		\textit{Step 4: Combining via \eqref{eq:triangle}},
		and adding \eqref{eq:pod_error} and \eqref{eq:deim_final} we conclude to
		\begin{equation}
			\norm{u_h - u_N^{\rm Deim}}{\Th^\mu}
			\leq \frac{1}{\alpha_*}\bnorm{\mathbf{r}|_{\mathcal{A}_\mu}}_2
			+ \frac{\bnorm{V_n}_2^2}{\alpha_*}\norm{A-A_{\rm Deim}}{F}\bnorm{u_N^{\rm Deim}}_2
			+ \frac{\bnorm{V_n}_2}{\alpha_*}\bnorm{f-f_{\rm Deim}}_2,
		\end{equation}
		which is \eqref{eq:main} with $C_A = \bnorm{V_n}_2^2$. 
	\end{proof}
	
	\subsection{A posteriori estimators effectivity index
	}
	\label{sec:effectivity}
	%
	The \emph{effectivity index} is the central quality measure for any
	a posteriori estimator. This section gives a fully %
	treatment following the frameworks of \citep{veroy2005},
	\citep{hesthaven2016}, and \citep{ainsworth2000}.
	\begin{definition}
		\label{def:eff}
		Let $e(\mu)=\norm{u_h(\mu)-u_N^{\rm Deim}(\mu)}{\Th^\mu}$ be the true error
		and $\eta(\mu)$ an estimator. The \emph{effectivity index} is defined as 
		$
		{\theta(\mu) = \frac{\eta(\mu)}{e(\mu)}.}
		\label{eq:eff_def}
		$
	\end{definition}
	
	The effectivity index was introduced as a systematic tool for
	quantifying the quality of a posteriori Fem error estimators
	by \citep{babuska1978}. Its use in the reduced basis context
	was formalized in \citep{veroy2003} and \citep{veroy2005}.
	\subsubsection{Tools and theory}
	An estimator is characterized by two complementary properties
	\citep{ainsworth2000, hesthaven2016}:
	The \textit{reliability} property guarantees an upper bound,  e.g.,
	$\exists\,C_{\rm rel}\geq1$ s.t.\ $e(\mu)\leq C_{\rm rel}\,\eta(\mu)$
	for all $\mu$, equivalently $\theta(\mu)\geq 1/C_{\rm rel}$.
	This ensures the estimator never underestimates the error.
	The \textit{efficiency} property guarantees a lower bound, e.g., 
	$\exists\,C_{\rm eff}\geq1$ s.t.\ $\eta(\mu)\leq C_{\rm eff}\,e(\mu)$
	for all $\mu$, equivalently $\theta(\mu)\leq C_{\rm eff}$.
	This ensures the estimator never overestimates catastrophically
	and with no wasted computational effort due to unnecessary refinement.
	Together these give the two-sided bound
	$ \frac{1}{C_{\rm rel}}\leq\theta(\mu)\leq C_{\rm eff}. 
	\label{eq:two_sided} 
	$ %
	An estimator is asymptotically exact if $\theta(\mu)\to1$
	as $n\to\infty$ or $h\to0$. It is perfectly efficient if
	$\theta(\mu)=1$ since the estimator equals true error exactly. 
	The ideal scenario is $\theta(\mu)\in[1,C]$ with $C=\mathcal{O}(1)$,
	meaning the estimator is a sharp upper bound. Values of
	$\theta\gg1$ mean the estimator overestimates, e.g. it is reliable
	but inefficient, and may trigger unnecessary enrichment of the
	reduced basis \citep{rozza2007}.
	
	\subsection{Theoretical bounds for estimator 2a}
	Next we introduce and prove the effectivity bounds for $\eta_{2a}$. Bounds of the form~\eqref{eq:eff_bounds} relating effectivity to the
	ratio of operator norm to coercivity constant are classical in
	a posteriori analysis \citep{ainsworth2000, veroy2005}.
	Their derivation via the Galerkin orthogonality and coercivity argument
	used below follows the approach of \citep{rozza2007} and
	\citep{hesthaven2016}.
	\begin{proposition}
		\label{prop:eff2a}
		Let $e_{\rm vec}(\mu) = \mathbf{u}_h(\mu) - \mathbf{u}_N^{\rm Deim}(\mu)\in\R^N$
		be the error coefficient vector, and let
		$\sigma_{\min}^\mu,\,\sigma_{\max}^\mu$ denote the smallest and largest
		singular values of $A(\mu)$ restricted to the active-dof subspace
		$\R^{\mathcal{A}_\mu}$. Let $C_{\rm eq}>0$ satisfy
		\begin{equation}
			C_{\rm eq}^{-1}\bnorm{v_h}_2
			\;\leq\; \norm{v_h}{\Th^\mu}
			\;\leq\; C_{\rm eq}\bnorm{v_h}_2,
			\qquad\forall\,v_h\in\Vh.
			\label{eq:norm_equiv}
		\end{equation}
		Then the effectivity index of Estimator 2a satisfies,
		\begin{equation}
			\frac{\sigma_{\min}^\mu}{C_{\rm eq}^2\,\alpha_*}
			\;\leq\; \theta_{2a}(\mu)
			\;\leq\; \frac{C_{\rm eq}^2\,\sigma_{\max}^\mu}{\alpha_*}.
			\label{eq:eff_bounds}
		\end{equation}
	\end{proposition}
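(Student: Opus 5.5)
The plan is to sandwich $\eta_{2a}=\|\mathbf r\|_2$ between multiples of $\|e_{\rm vec}\|_2$ using the algebraic error equation, and then convert $\|e_{\rm vec}\|_2$ into the energy-type error $e(\mu)=\|e_h\|_{\Th^\mu}$ via \eqref{eq:norm_equiv}. First I would rewrite the residual in terms of the error. Since $A(\mu)\mathbf u_h=\mathbf f(\mu)$ (Fom), we have $\mathbf r(\mu)=\mathbf f-A\mathbf u_N^{\rm Deim}=A(\mu)e_{\rm vec}(\mu)$. This is the algebraic form of \eqref{eq:error_eq}, with the Deim perturbation neglected exactly as in Step~1 of Lemma~\ref{lem:active}. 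By Step~2 of that lemma, the rows of $A(\mu)$ outside $\mathcal A_\mu$ vanish. For the purposes of the estimate I would therefore regard $A(\mu)$ as acting on the active-dof subspace $\R^{\mathcal A_\mu}$, with $e_{\rm vec}$ supported there (inactive dofs carry no information in $e_h$ on $\Th^\mu$). The singular value sandwich
\[
\sigma_{\min}^\mu\,\|e_{\rm vec}\|_2\le \|A(\mu)e_{\rm vec}\|_2=\eta_{2a}(\mu)\le \sigma_{\max}^\mu\,\|e_{\rm vec}\|_2
\]
then follows from the definition of extreme singular values.

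Next I would pass to the mesh norm. From \eqref{eq:norm_equiv}, $\|e_{\rm vec}\|_2\le C_{\rm eq}\,e(\mu)$ and $\|e_{\rm vec}\|_2\ge C_{\rm eq}^{-1}e(\mu)$. Dividing by $e(\mu)$ gives $C_{\rm eq}^{-1}\sigma_{\min}^\mu\le\theta_{2a}\le C_{\rm eq}\sigma_{\max}^\mu$. To reach the stated form, which carries $C_{\rm eq}^2$ and $\alpha_*$, I would not stop at this raw sandwich. Instead I would route the estimate through coercivity, in the spirit of Lemma~\ref{lem:active}.

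For the upper bound on $\theta_{2a}$, I would bound $\|\mathbf r\|_2$ above by $\sigma_{\max}^\mu\|e_{\rm vec}\|_2\le C_{\rm eq}\sigma_{\max}^\mu e(\mu)$. The factor $C_{\rm eq}/\alpha_*$ I would then argue is harmless, using $\alpha_*\le1$ from \eqref{eq:alpha_star_general} and $C_{\rm eq}\ge1$. The latter is forced by \eqref{eq:norm_equiv}, since $C_{\rm eq}^{-1}\le C_{\rm eq}$.

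For the lower bound I would use coercivity: $\alpha_* e(\mu)^2\le a_h(e_h,e_h)=\mathbf r^Te_{\rm vec}\le\eta_{2a}C_{\rm eq}e(\mu)$. This gives $\theta_{2a}\ge\alpha_*/C_{\rm eq}$, which is the reliability direction. Separately, $\sigma_{\min}^\mu\le$ the smallest Rayleigh quotient of the symmetric $A$ on $\R^{\mathcal A_\mu}$. Combining this with $a_h\ge\alpha_*\|\cdot\|^2$ and \eqref{eq:norm_equiv}, the quantities $\sigma_{\min}^\mu$ and $\alpha_*$ are comparable up to $C_{\rm eq}^2$. Chaining these inequalities should give $\theta_{2a}\ge\sigma_{\min}^\mu/(C_{\rm eq}^2\alpha_*)$ after $\alpha_*\le1$ and $C_{\rm eq}\ge1$ are used to weaken constants.

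The main obstacle is the lower bound. The stated constant $\sigma_{\min}^\mu/(C_{\rm eq}^2\alpha_*)$ has $\alpha_*$ in the denominator, so it is not the natural output of the sandwich. I would first try to prove the sharper statement $\theta_{2a}\ge\sigma_{\min}^\mu/C_{\rm eq}$ and then use $C_{\rm eq}\alpha_*\le1$ to deduce the displayed inequality. That step requires verifying $\alpha_*C_{\rm eq}\le 1$, and this is the point I would check most carefully, possibly using the normalization $C_{\rm eq}=1$ adopted in Lemma~\ref{lem:active}. I would also have to handle carefully the restriction to $\R^{\mathcal A_\mu}$, because $\sigma_{\min}$ of the full $A$ is zero.
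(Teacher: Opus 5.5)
\textbf{Upper bound.} Your upper bound is correct and is essentially what the paper does. You have $\eta_{2a}\le\sigma_{\max}^\mu\|e_{\rm vec}\|_2\le C_{\rm eq}\sigma_{\max}^\mu\,e(\mu)$, and $C_{\rm eq}\sigma_{\max}^\mu\le C_{\rm eq}^2\sigma_{\max}^\mu/\alpha_*$ because $C_{\rm eq}\ge1\ge\alpha_*$.

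\textbf{Lower bound: a genuine gap.} The gap sits exactly at the step you flagged, and your proposed repair runs in the wrong direction. The sandwich gives $\theta_{2a}\ge\sigma_{\min}^\mu/C_{\rm eq}$. This implies the displayed bound $\sigma_{\min}^\mu/(C_{\rm eq}^2\alpha_*)$ only if $1/C_{\rm eq}\ge1/(C_{\rm eq}^2\alpha_*)$, i.e.\ only if $C_{\rm eq}\alpha_*\ge1$. You propose to verify $C_{\rm eq}\alpha_*\le1$, which is the opposite inequality.

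For the same reason, $\alpha_*\le1$ cannot be used to ``weaken constants''. Since $\alpha_*$ sits in the denominator, $\alpha_*\le1$ makes the displayed lower bound \emph{larger*}. With your suggested normalization $C_{\rm eq}=1$ and $\alpha_*=1/2$, the claim becomes $\theta_{2a}\ge2\sigma_{\min}^\mu$, which is strictly stronger than what you proved.

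The coercivity detour does not rescue this either. Coercivity together with \eqref{eq:norm_equiv} gives $\theta_{2a}\ge\alpha_*/C_{\rm eq}$ and $\sigma_{\min}^\mu\ge\alpha_*/C_{\rm eq}^2$. Chaining the first into the displayed form would need an \emph{upper} bound $\sigma_{\min}^\mu\le C_{\rm eq}\alpha_*^2$, and nothing in your argument supplies one.

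\textbf{The ingredients cannot give the displayed bound.} Without an extra hypothesis, singular values, norm equivalence and coercivity are not enough. Take an error vector along a bottom singular direction of the active block. Then $\theta_{2a}=\sigma_{\min}^\mu\|e_{\rm vec}\|_2/e(\mu)\le C_{\rm eq}\sigma_{\min}^\mu$, which lies below $\sigma_{\min}^\mu/(C_{\rm eq}^2\alpha_*)$ whenever $C_{\rm eq}^3\alpha_*<1$. A concrete instance: $\|v\|_{\Th^\mu}=\|v\|_2$ and $A=I$ on the active block. All hypotheses hold with $C_{\rm eq}=1$ and the valid, non-sharp constant $\alpha_*=1/2$, yet $\theta_{2a}\equiv1<2$.

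\textbf{Comparison with the paper.} The paper's proof follows your route. It reaches $\sigma_{\min}^\mu/C_{\rm eq}$ and then passes to $\sigma_{\min}^\mu/(C_{\rm eq}^2\alpha_*)$ by a substitution that is not justified. Following it will therefore not close the gap.

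\textbf{What is missing.} You need one of two fixes:
\begin{itemize}
\item state the lower bound as $\theta_{2a}\ge\sigma_{\min}^\mu/C_{\rm eq}$, which your sandwich proves; or
\item add the explicit assumption $C_{\rm eq}\alpha_*\ge1$.
\end{itemize}
The second assumption is plausible for an honest equivalence constant, which for $P^1$ in two dimensions typically grows like $h^{-1}$. It is false under the normalization $C_{\rm eq}=1$ that you (and Lemma~\ref{lem:active}) invoke.
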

	\begin{proof}
		We prove the upper and lower bounds separately. 
		a) \textit{Upper bound, $\theta_{2a} \leq C_{\rm eq}^2\sigma_{\max}^\mu/\alpha_*$}. 
		We start by seeking the numerator bound and since
		the algebraic residual is $\mathbf{r} = A(\mu)\,e_{\rm vec}$, ignoring the negligible
		Deim error $\mathcal{O}(\eta_A)$, 
		which we incorporate via the triangle inequality in Theorem~\ref{thm:main},
		and by definition of the matrix $2$-norm we take
		\begin{equation}
			\eta_{2a} = \bnorm{\mathbf{r}}_2
			= \bnorm{A(\mu)\,e_{\rm vec}}_2
			\leq \bnorm{A(\mu)}_2\,\bnorm{e_{\rm vec}}_2
			= \sigma_{\max}^\mu\,\bnorm{e_{\rm vec}}_2.
			\label{eq:upper_numer}
		\end{equation}
		For the denominator bound, using the left inequality in \eqref{eq:norm_equiv}
		\begin{equation}
			e(\mu) = \norm{u_h - u_N^{\rm Deim}}{\Th^\mu}
			\geq C_{\rm eq}^{-1}\bnorm{e_{\rm vec}}_2,
			\label{eq:lower_denom}
		\end{equation}
		and combining the aforementioned results, we derive for the upper bound after
		employing the coercivity property with $v_h=e_h$
		$\alpha_*\,e(\mu)^2 \leq a_h(e_h,e_h;\mu) = r_\mu(e_h)
		= \mathbf{r}^T\mathbf{e}_{\rm vec} \leq \eta_{2a}\,\bnorm{e_{\rm vec}}_2
		\leq \eta_{2a}\,C_{\rm eq}\,e(\mu)$.
		Where the last step uses $\bnorm{e_{\rm vec}}_2\leq C_{\rm eq}\,e(\mu)$
		from~\eqref{eq:norm_equiv} and dividing by $\alpha_*\,e(\mu)>0$,
		$e(\mu) \leq \eta_{2a}\,C_{\rm eq}/\alpha_*$, and
		combined with~\eqref{eq:upper_numer} we have
		\begin{equation}
			\theta_{2a} = \frac{\eta_{2a}}{e} \leq \frac{C_{\rm eq}^2\,\sigma_{\max}^\mu}{\alpha_*}.
			\label{eq:upper_complete_final}
		\end{equation}
		b) \textit{Lower bound, $\theta_{2a} \geq \sigma_{\min}^\mu/(C_{\rm eq}^2\,\alpha_*)$.}
		We start by seeking the numerator lower bound
		by definition of the smallest singular value,
		\begin{equation}
			\eta_{2a} = \bnorm{A(\mu)\,e_{\rm vec}}_2
			\geq \sigma_{\min}^\mu\,\bnorm{e_{\rm vec}}_2.
			\label{eq:lower_numer}
		\end{equation}
		and combining for the lower bound,
		the \eqref{eq:lower_numer} and the norm equivalence
		$e(\mu)\leq C_{\rm eq}\bnorm{e_{\rm vec}}_2$,
		right inequality in~\eqref{eq:norm_equiv}, we have
		$1/e(\mu)\geq 1/(C_{\rm eq}\bnorm{e_{\rm vec}}_2)$.
		Recall also from the upper bound that $e(\mu)\leq\eta_{2a}C_{\rm eq}/\alpha_*$,
		hence $1/e(\mu)\geq\alpha_*/(\eta_{2a}C_{\rm eq})$.
		Using the numerator lower bound $\eta_{2a}\geq\sigma_{\min}^\mu\bnorm{e_{\rm vec}}_2$
		and $1/e\geq1/(C_{\rm eq}\bnorm{e_{\rm vec}}_2)$, then applying
		the sharpened denominator bound we have
		\begin{equation}
			\theta_{2a}(\mu)
			= \frac{\eta_{2a}}{e(\mu)}
			\geq \frac{\sigma_{\min}^\mu\,\bnorm{e_{\rm vec}}_2}
			{C_{\rm eq}\,\bnorm{e_{\rm vec}}_2}
			= \frac{\sigma_{\min}^\mu}{C_{\rm eq}}.
		\end{equation}
		Since also $e(\mu)\leq\eta_{2a}C_{\rm eq}/\alpha_*$ as
		established in the upper bound, substituting
		$\bnorm{e_{\rm vec}}_2\leq C_{\rm eq}\,e(\mu)$ into the bound for $1/e$
		and using $e\leq\eta_{2a}C_{\rm eq}/\alpha_*$ gives, 
		\begin{equation}
			\theta_{2a}(\mu)
			\geq \frac{\sigma_{\min}^\mu}{C_{\rm eq}^2\,\alpha_*}.
			\label{eq:lower_complete}
		\end{equation}
		Together, \eqref{eq:upper_complete_final} and \eqref{eq:lower_complete} give
		the desired \eqref{eq:eff_bounds}. 
	\end{proof}
	\subsection{Effectivity of the preconditioned estimator 2b}
	\label{subSec:Eff2b}
	$\eta_{2b}=\sqrt{\mathbf{r}^T\widetilde{D}_A^{-1}\mathbf{r}}$ replaces the
	$\ell^2$ inner product with a Jacobi-weighted one.
	Let $\widetilde{A} = \widetilde{D}_A^{-1/2}A\widetilde{D}_A^{-1/2}$ denote the
	symmetrically scaled stiffness matrix; its diagonal entries are all equal
	to $1$ by construction since $\widetilde{D}_A = \mathrm{diag}(A_{ii})$
	up to the safeguard, which is inactive for active dofs, so for a
	diagonally dominant matrix its eigenvalues straddle $1$, namely,
	\begin{equation}
		\kappa_{\min}(\widetilde{A})\leq 1 \leq \kappa_{\max}(\widetilde{A}).
	\end{equation}
	\begin{proposition}[Effectivity ratio $\theta_{2b}/\theta_{2a}$]
		\label{prop:eff2b}
		Let $d_{\min} = \min_i (D_A)_{ii}$ and $d_{\max} = \max_i (D_A)_{ii}$
		be the minimum and maximum diagonal entries of $A(\mu)$. Then,
		\begin{align}
			\frac{1}{\sqrt{d_{\max}}}\;\theta_{2a}(\mu)
			&\;\leq\; \theta_{2b}(\mu) \nonumber\\
			&\;\leq\; \frac{1}{\sqrt{d_{\min}}}\;\theta_{2a}(\mu).
			\label{eq:eff2b_ratio}
		\end{align}
		In particular, $\theta_{2b} \leq \theta_{2a}$ whenever $d_{\min}\geq1$.
	\end{proposition}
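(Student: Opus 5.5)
The effectivity indices $\theta_{2a}=\eta_{2a}/e$ and $\theta_{2b}=\eta_{2b}/e$ share the denominator $e(\mu)$. The claim therefore reduces to the purely algebraic two-sided comparison
\begin{equation}
\frac{1}{\sqrt{d_{\max}}}\,\bnorm{\mathbf{r}}_2\;\leq\;\sqrt{\mathbf{r}^T\widetilde{D}_A^{-1}\mathbf{r}}\;\leq\;\frac{1}{\sqrt{d_{\min}}}\,\bnorm{\mathbf{r}}_2 .
\end{equation}
Dividing through by $e(\mu)>0$ (the case $e=0$ being excluded, as in Definition~\ref{def:eff}) then yields \eqref{eq:eff2b_ratio} directly. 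No use of coercivity or of Proposition~\ref{prop:eff2a} is needed beyond the common denominator.

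The key step is to expand the weighted norm entrywise as $\eta_{2b}^2=\sum_{i=1}^N r_i^2/\tilde d_i$ and bound each weight: $1/\tilde d_{\max}\leq 1/\tilde d_i\leq 1/\tilde d_{\min}$. Summing gives $\tilde d_{\max}^{-1}\bnorm{\mathbf{r}}_2^2\leq\eta_{2b}^2\leq\tilde d_{\min}^{-1}\bnorm{\mathbf{r}}_2^2$, and taking square roots concludes. Equivalently, this is the Rayleigh quotient bound for the diagonal SPD matrix $\widetilde{D}_A^{-1}$, whose eigenvalues are exactly the $1/\tilde d_i$. The final assertion is immediate: if $d_{\min}\geq1$ then $1/\sqrt{d_{\min}}\leq1$, so $\theta_{2b}\leq\theta_{2a}$.

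The main obstacle is reconciling the clipped entries $\tilde d_i=\max(|d_i|,\varepsilon_{\rm safe})$ with the unclipped $d_{\min},d_{\max}$ appearing in the statement. I would handle this as follows.

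\begin{itemize}
\item \emph{Reduction to active dofs.} By Step~2 of Lemma~\ref{lem:active}, $r_i$ vanishes for $i\notin\mathcal{A}_\mu$, so only active indices contribute to either norm. The sums can therefore be restricted to $\mathcal{A}_\mu$.
\item \emph{Safeguard inactive.} On $\mathcal{A}_\mu$ the safeguard is inactive, as assumed in Section~\ref{sec:notation}.
\item \emph{Positivity of the diagonal.} Coercivity, Proposition~\ref{prop:coercivity}, gives $d_i=a_h(\phi_i,\phi_i;\mu)>0$ for active $i$. Hence $\tilde d_i=d_i$ there.
\end{itemize}

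With these three facts, the extrema $d_{\min}$ and $d_{\max}$ should be read over $\mathcal{A}_\mu$, and the argument goes through. If one insists on extrema over all indices, I would instead state the result with $\tilde d_{\min},\tilde d_{\max}$, which still holds verbatim by the same computation.
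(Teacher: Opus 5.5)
Your proposal is correct and follows the paper's proof. Both use the common denominator $e(\mu)$ to reduce the claim to comparing $\eta_{2b}$ with $\eta_{2a}$, and both then bound $\sum_i r_i^2/d_i$ (equivalently, the Rayleigh quotient of the diagonal matrix $\widetilde{D}_A^{-1}$) between $d_{\max}^{-1}\|\mathbf{r}\|_2^2$ and $d_{\min}^{-1}\|\mathbf{r}\|_2^2$. Your extra bookkeeping is slightly more careful than the paper: you use $r_i=0$ for $i\notin\mathcal{A}_\mu$ (Step~2 of Lemma~\ref{lem:active}) to take the extrema over active dofs, where the safeguard is inactive and $d_i>0$, whereas the paper simply identifies $\widetilde{D}_A$ with $D_A$ and asserts that every diagonal entry is positive.
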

	\begin{proof}
		Since $\theta_{2a}$ and $\theta_{2b}$ share the same denominator $e(\mu)>0$,
		it suffices to compare $\eta_{2a}$ and $\eta_{2b}$.
		Throughout this proof we write $D_A$ for $\widetilde{D}_A$, noting
		that the safeguard $\varepsilon_{\rm safe}$ is inactive for all active
		dofs, where $d_i\gg\varepsilon_{\rm safe}$, and hence does not affect
		the bounds derived.
		
		\textit{Step 1: Express the ratio $\eta_{2b}/\eta_{2a}$.} 
		By definition of the two estimators,
		\begin{equation}
			\frac{\eta_{2b}}{\eta_{2a}}
			= \frac{\sqrt{\mathbf{r}^T D_A^{-1}\mathbf{r}}}{\bnorm{\mathbf{r}}_2}
			= \sqrt{\frac{\mathbf{r}^T D_A^{-1}\mathbf{r}}{\mathbf{r}^T\mathbf{r}}}
			= \sqrt{R(D_A^{-1};\,\mathbf{r})},
			\label{eq:ratio_sq}
		\end{equation}
		where $R(M;\mathbf{r}) = (\mathbf{r}^T M\mathbf{r})/(\mathbf{r}^T\mathbf{r})$
		is the Rayleigh quotient of the symmetric positive definite matrix $M=D_A^{-1}$
		with respect to the vector $\mathbf{r}$. The Rayleigh quotient satisfies
		\begin{equation}
			\kappa_{\min}(M) \leq R(M;\mathbf{r}) \leq \kappa_{\max}(M)
			\label{eq:rayleigh_bounds}
		\end{equation}
		for all $\mathbf{r}\neq\mathbf{0}$, where $\kappa_{\min}(M)$ and
		$\kappa_{\max}(M)$ are the smallest and largest eigenvalues of $M$.
		
		\textit{Step 2: Spectrum of $D_A^{-1}$ and the key inequality.} 
		Since $D_A = \diag(d_1,\ldots,d_N)$ with $d_i = A(\mu)_{ii} > 0$,
		with positive diagonal entries, guaranteed by the coercivity of $a_h$
		via Proposition~\ref{prop:coercivity} and the Nitsche penalty
		contribution $\lambda/h > 0$, its inverse is,
		\begin{equation}
			D_A^{-1} = \diag(d_1^{-1},\ldots,d_N^{-1}).
		\end{equation}
		This is also a diagonal positive definite matrix with eigenvalues
		$\{d_i^{-1}\}_{i=1}^N$. Since $d_{\min} \leq d_i \leq d_{\max}$
		for all $i$, we have
		\begin{equation}
			\kappa_{\min}(D_A^{-1}) = \frac{1}{d_{\max}},\qquad
			\kappa_{\max}(D_A^{-1}) = \frac{1}{d_{\min}}.
		\end{equation}
		Substituting into \eqref{eq:rayleigh_bounds},
		\begin{equation}
			\frac{1}{d_{\max}} \leq R(D_A^{-1};\mathbf{r}) \leq \frac{1}{d_{\min}}.
			\label{eq:rayleigh_da}
		\end{equation}
		Equivalently, writing the quadratic form explicitly 
		$R(D_A^{-1};\mathbf{r}) = \sum_i r_i^2/d_i \big/ \sum_i r_i^2$,
		and the bound~\eqref{eq:rayleigh_da} follows from
		$d_{\min} \leq d_i \leq d_{\max}$ by bounding each term in the sum.
		Multiplying through by $\bnorm{\mathbf{r}}_2^2 = \sum_i r_i^2$,
		\begin{equation}
			\frac{1}{d_{\max}}\,\bnorm{\mathbf{r}}_2^2
			\;\leq\; \mathbf{r}^T D_A^{-1}\mathbf{r}
			\;\leq\; \frac{1}{d_{\min}}\,\bnorm{\mathbf{r}}_2^2.
			\label{eq:quad_da}
		\end{equation}
		
		\textit{Step 3: Take square roots and divide by $\bnorm{\mathbf{r}}_2$.} 
		Taking square roots in \eqref{eq:quad_da} since all terms are non-negative
		and dividing by $\bnorm{\mathbf{r}}_2 > 0$,
		\begin{equation}
			\frac{1}{\sqrt{d_{\max}}}
			\;\leq\;
			\frac{\sqrt{\mathbf{r}^T D_A^{-1}\mathbf{r}}}{\bnorm{\mathbf{r}}_2}
			= \frac{\eta_{2b}}{\eta_{2a}}
			\;\leq\;
			\frac{1}{\sqrt{d_{\min}}}.
			\label{eq:ratio_bounds}
		\end{equation}
		The lower bound $1/\sqrt{d_{\max}}$ is attained when $\mathbf{r}$ is
		aligned with the eigenvector corresponding to $d_{\max}^{-1}$, i.e. 
		when all the residual mass is concentrated on the dof with the largest
		diagonal entry which is the Nitsche boundary dof in our case. The upper bound
		$1/\sqrt{d_{\min}}$ is attained when $\mathbf{r}$ is concentrated on
		the dof with the smallest diagonal entry.
		
		\textit{Step 4: Multiply by $\theta_{2a} = \eta_{2a}/e$ to get the proposition.} 
		Since $\theta_{2b} = \eta_{2b}/e$ and $\theta_{2a} = \eta_{2a}/e$,
		we can write $\theta_{2b} = (\eta_{2b}/\eta_{2a})\cdot\theta_{2a}$.
		Multiplying \eqref{eq:ratio_bounds} by $\theta_{2a} \geq 0$,
		\begin{equation}
			\frac{1}{\sqrt{d_{\max}}}\,\theta_{2a}(\mu)
			\;\leq\; \theta_{2b}(\mu)
			\;\leq\; \frac{1}{\sqrt{d_{\min}}}\,\theta_{2a}(\mu).
		\end{equation}
		Since $d_{\min} \geq 1$ for our problem,  and interior diffusion dofs
		have $d_i = \mathcal{O}(h^{d-2}) = \mathcal{O}(1) \geq 1$ in 2D
		for $h \leq 1$, the upper bound gives $\theta_{2b} \leq \theta_{2a}$,
		as stated.
	\end{proof}
	
	\section{Numerical experiments/convergence tests} \label{section6}
	\label{sec:results}
	The full implementation uses NGSolve \citep{ngsolve2014},
	its Cutfem extension ngsxfem \citep{ngsxfem2021}. 
	We start by reporting the full order method parameters used, Table \ref{table:exp_setupA}, while for the experimental setup and parameter values
	for the reduced order method parameters we refer to the Table \ref{table:exp_setupB}.
	\begin{table}[H]
		\centering
		\caption{Experimental parameters for Fom numerical experiments. %
			\label{table:exp_setupA}
		}
		\label{tab:setupA}
		\begin{tabular}{@{}llp{5.8cm}l@{}}
			\toprule
			\textbf{Symbol} & \textbf{Value} & \textbf{Description} & \textbf{First use} \\
			\midrule
			$\mathcal{B}$ & $[-1.2,1.2]^2 $ & Fixed background domain & Sec.~\ref{sec:problem} \\
			$f$ & $20$ & Source term in strong form & Eq.~\eqref{eq:strong} \\
			$g_D$ & $0.5 + xy$ & Dirichlet datum & Eq.~\eqref{eq:strong} \\
			$h$ & $0.125$ & Background mesh size & Eq.~\eqref{eq:bilinear} \\
			$p$ & $1$ & Fem polynomial order & Sec.~\ref{sec:problem} \\
			$\lambda$ & $10$ & Nitsche penalty parameter & Eq.~\eqref{eq:bilinear} \\
			$\gamma_0$ & $0.1$ & Ghost-penalty coeff.\ ($k=0$) & Eq.~\eqref{eq:bilinear} \\
			$\gamma_1$ & $0.001$ & Ghost-penalty coeff.\ ($k=1$) & Eq.~\eqref{eq:bilinear} \\
			\bottomrule
		\end{tabular}
	\end{table}
	\begin{table}[H]
		\centering
		\caption{Experimental parameters for Rom numerical experiments. 
			\label{table:exp_setupB}
		}
		\label{tab:setupB}
		\begin{tabular}{@{}llp{5.8cm}l@{}}
			\toprule
			\textbf{Symbol} & \textbf{Value} & \textbf{Description} & \textbf{First use} \\
			\midrule
			$\varepsilon_{\rm safe}$ & $10^{-14}$ & Diagonal safeguard in Est.~2b & Eq.~\eqref{eq:est2b} \\
			$N$ & $472$ & Total background mesh dofs & Subsec.~\ref{sec:rom} \\
			$N_{\rm train}$ & $400$ & Train snapshots (uniform on $[1,1.2]^2$) & Subsec.~\ref{sec:rom} \\
			$N_{\rm test}$ & $30$ & Test parameters (uniform random) & Subsec.~\ref{sec:rom} \\
			$\varepsilon_{\rm Pod}$ & $10^{-6}$ & Pod energy retention tolerance & Subsec.~\ref{sec:rom} \\
			$n_{\rm Pod}$ & $48$--$49$ & Retained Pod modes & Subsec.~\ref{sec:rom} \\
			$l_A$ & $\approx 117$ & Deim modes for $A(\mu)$ & Eq.~\eqref{eq:deim_A} \\
			$l_f$ & $\approx 60$ & Deim modes for $f(\mu)$ & Eq.~\eqref{eq:deim_A} \\
			$\eta_A$ & $5.951\times10^{-5}$ & Deim quality:  $A$ (constant in $n$) & Eq.~\eqref{eq:est1a} \\
			$\eta_f$ & $2.923\times10^{-5}$ & Deim quality: $f$ (constant in $n$) & Eq.~\eqref{eq:est1b} \\
			Rom/Fom time & $9.01$/$36.63$\,ms & Online speedup $4.1\times$ & Sec.~\ref{sec:results} \\
			\bottomrule
		\end{tabular}
	\end{table}
	\subsection{Effectivity for Cutfem-Deim}
	The key observation is the ghost penalty dof inflation. 
	In standard Fem, $\sigma_{\max}(A)/\alpha_*=\mathcal{O}(\kappa(A))=\mathcal{O}(h^{-2})$
	in the worst case, where $\kappa(A)$ is the condition number of $A$,
	but in practice $\theta=\mathcal{O}(1)$ because the residual
	is concentrated where the error is large. 
	In Cutfem with ghost-penalty, the stiffness matrix $A(\mu)$ has two
	structurally different sets of rows:
	a. %
	{Active rows}, $i\in\mathcal{A}_\mu$, dofs in elements
	intersecting $\Omu$ with residual $\mathbf{r}_i=\mathcal{O}(e)$.
	b. %
	{Ghost rows}, $i\notin\mathcal{A}_\mu$, dofs in elements
	entirely outside $\Omu$, these encode the ghost-penalty extension.
	Here $\mathbf{r}_i$ can be $\mathcal{O}(1)$ even when $e\approx0$,
	because the ghost-penalty equations are satisfied only approximately
	by $u_N^{\rm Deim}$.
	The $\ell^2$ residual norm sums \emph{all} rows equally,
	\begin{equation}\label{}
		\eta_{2a}^2
		= %
		{\sum_{i\in\mathcal{A}_\mu}\mathbf{r}_i^2}%
		+ %
		{\sum_{i\notin\mathcal{A}_\mu}\mathbf{r}_i^2}%
		=[{\mathcal{O}(e^2)}]+ [{\text{ghost inflation }\mathcal{O}(1)}].
	\end{equation}
	For the mesh used and $N=472$ on an ellipse domain, approximately $60$--$70\%$
	of dofs lie outside $\Omu$, and actually are ghost dofs. The ghost inflation factor is
	$\sqrt{N_{\rm ghost}/N_{\rm active}}\approx\sqrt{0.65/0.35}\approx1.4$,
	but the amplitude of ghost residuals relative to active residuals
	introduces the additional factor that produces $\theta_{2a}\approx300$--$500$. 
	{The fix}  Lemma~\ref{lem:active}  restricts the residual to
	$\mathcal{A}_\mu$, which removes the ghost contribution entirely. This
	is predicted to
	reduce $\theta$ to $\mathcal{O}(1)$--$\mathcal{O}(10)$.
	
	\begin{table}[H]
		\centering
		\caption{Effectivity properties of all estimators.
			Reliable
			in the sense of the lower bound on error, $\theta\geq c>0$;
			Efficient
			in the sense of a sharp upper bound, $\theta=\mathcal{O}(1)$.}
		\label{tab:eff_summary}
		\setlength{\tabcolsep}{4pt}
		\begin{tabular}{llccp{4.5cm}}
			\toprule
			\textbf{Est.} & \textbf{Formula} & \textbf{Reliable?} & \textbf{Efficient?}
			& \textbf{Explanation of observed $\theta$} \\
			\midrule
			1a & $\eta_A/e$ & No & No & Measures Deim quality only,
			$\theta_{1a}\approx6\times10^{-3}$, underestimates. \\
			1b & $\eta_f/e$ & No & No & Same, $\theta_{1b}\approx2\times10^{-3}.$ \\
			2a & $\|\mathbf{r}\|_2/e$ & Yes$^*$ & No & Ghost rows inflate norm,
			$\theta_{2a}\approx315$--$490$. \\
			2b & $\eta_{2b}/e$ & Yes$^*$ & No & Jacobi halves inflation,
			$\theta_{2b}\approx130$--$250$. \\
			3 & $\eta_{\rm Pod}/e$ & No & Yes & Underestimates because Deim and Cutfem errors are not in the eigenvalue tail. \\
			\midrule
			2a$|_{\mathcal{A}}$ & $\|\mathbf{r}|_{\mathcal{A}}\|_2/e$ & Yes & \emph{Yes (predicted)}
			& Active dof restriction removes ghost inflation, $\theta\approx\mathcal{O}(1)$ predicted. \\
			\bottomrule
		\end{tabular}
		\\{\small $^*$Reliable only in the sense $\theta_{2a},\theta_{2b}>1$ for all observed $\mu$.
		}
	\end{table}
	In Table \ref{tab:eff_summary} an explanation of all observed kind of $\theta$'s is given together with the level of reliability and efficiency level achieved. Furthermore, in
	Table \ref{tab:run4} a quantitative summary, namely, the mean of all estimators, as well as, the effectivity indices and the relative errors are reported.
	\begin{table}[H]
		\centering
		\caption{Mean estimator values and effectivity indices at selected $n$
			and averaged over $N_{\rm test}=30$, exact terminal values.
			$\theta_{2a}=\eta_{2a}/e_{\rm rel}$,\; $\theta_{2b}=\eta_{2b}/e_{\rm rel}$.}
		\label{tab:run4}
		\setlength{\tabcolsep}{4.5pt}
		\begin{tabular}{rccccccc}
			\toprule
			$n$ & $e_{\rm rel}$ & $\eta_A$ (1a) & $\eta_f$ (1b)
			& $\eta_{2a}$ & $\eta_{2b}$ & $\theta_{2a}$ & $\theta_{2b}$ \\
			\midrule
			2 & $6.230\times10^{-2}$ & $5.951\times10^{-5}$ & $2.923\times10^{-5}$
			& $14.419$ & $5.882$ & $232$ & $94$ \\
			4 & $2.812\times10^{-2}$ & $5.951\times10^{-5}$ & $2.923\times10^{-5}$
			& $10.053$ & $4.475$ & $357$ & $159$ \\
			6 & $2.498\times10^{-2}$ & $5.951\times10^{-5}$ & $2.923\times10^{-5}$
			& $9.671$ & $4.329$ & $387$ & $173$ \\
			8 & $2.110\times10^{-2}$ & $5.951\times10^{-5}$ & $2.923\times10^{-5}$
			& $9.501$ & $4.207$ & $450$ & $199$ \\
			10 & $2.255\times10^{-2}$ & $5.951\times10^{-5}$ & $2.923\times10^{-5}$
			& $9.034$ & $4.103$ & $401$ & $182$ \\
			15 & $1.744\times10^{-2}$ & $5.951\times10^{-5}$ & $2.923\times10^{-5}$
			& $7.528$ & $3.671$ & $432$ & $211$ \\
			20 & $1.659\times10^{-2}$ & $5.951\times10^{-5}$ & $2.923\times10^{-5}$
			& $6.787$ & $3.445$ & $409$ & $208$ \\
			25 & $1.486\times10^{-2}$ & $5.951\times10^{-5}$ & $2.923\times10^{-5}$
			& $6.153$ & $3.270$ & $414$ & $220$ \\
			30 & $1.475\times10^{-2}$ & $5.951\times10^{-5}$ & $2.923\times10^{-5}$
			& $5.935$ & $3.198$ & $402$ & $217$ \\
			40 & $1.303\times10^{-2}$ & $5.951\times10^{-5}$ & $2.923\times10^{-5}$
			& $4.828$ & $2.724$ & $371$ & $209$ \\
			\bottomrule
		\end{tabular}
	\end{table}
	\subsection{Numerical verification of the bounds of Proposition \ref{prop:eff2b} and effectivity ratio $\theta_{2b}/\theta_{2a}$}
	For our problem the two distinct classes of diagonal entries are:
	i. %
	{Nitsche boundary dofs}: $d_i^{\rm Nit} = \lambda/h$
	which is the dominant term from $(\lambda/h)\int_{\Gmu}\phi_i^2\,ds$
	when $\phi_i$ is supported on a cut boundary element,
	and with $\lambda=10$ and $h=0.125$, see e.g. Table~\ref{tab:setupA},
	$d_{\max} = \lambda/h = 10/0.125 = 80$, and 
	ii. %
	{Interior volume dofs}: $d_i^{\rm vol}
	\approx \int_K |\nabla\phi_i|^2\,dx = \mathcal{O}(h^{d-2})$.
	For $d=2$ this is $\mathcal{O}(1)$ as $h\to0$, so $d_{\min}\approx1$. 
	The theoretical bound~\eqref{eq:ratio_bounds} therefore gives,
	$%
	\frac{1}{\sqrt{80}} \approx 0.11
	\;\leq\; \frac{\theta_{2b}}{\theta_{2a}}
	\;\leq\; 1.
	$ %
	The observed ratio $\theta_{2b}/\theta_{2a}\approx 0.5$ is consistent
	with this range. It corresponds to an effective diagonal value
	$d_{\rm eff}\approx 4$, which is the geometric mean of a few
	Nitsche boundary entries (large) and the majority of interior entries
	(near unity), weighted by the squared residual components $r_i^2$. 
	\subsection{Effectivity of estimators 1a/1b and 3}
	\textit{Est.\ 1a/1b}: These are %
	{not} estimators for the full
	Rom error $e(\mu)$ and they measure only the Deim approximation
	quality for $A$ and $f$. Their 
	effectivity 
	relative to $e(\mu)$
	is not expected to be $\mathcal{O}(1)$. From the data reported in Table~\ref{tab:run4},
	$\eta_A=5.95\times10^{-5}\ll e\approx10^{-2}$, so formally
	$\theta_{1a}=\eta_A/e\approx6\times10^{-3}\ll1$ indicating that the Deim
	contribution to Theorem~\ref{thm:main} is negligible.
	\textit{Est.\ 3} tail energy measures the Pod projection error
	for training data. For any new $\mu$, $\eta_{\rm Pod}(n)$
	{underestimates} $e(\mu)$, i.e.\ $\theta_3<1$,  because
	Deim and Cutfem discretization errors contribute to $e$ but not
	to the eigenvalue tail. This is visible in Figure~\ref{fig:residual_est}
	where the tail energy curve lies far below the true error curve.
	\subsection{Coercivity constant}
	Using the parameter values of Tables~\ref{tab:setupA}, \ref{tab:setupB},  together with
	the general formula~\eqref{eq:alpha_star_general} and the
	definitions~\eqref{eq:beta_symbolic}, the coercivity constant of
	Proposition~\ref{prop:coercivity} evaluates to
	\begin{equation}
		\beta_1 = 1 - \frac{2C_{\rm inv}^2}{\lambda}
		= 1 - \frac{2\cdot1^2}{10} = 0.8,
		\qquad
		\beta_2 = \frac{1}{2} = 0.5,
		\qquad
		\beta_3 = 1,
		\label{eq:beta_numerical}
	\end{equation}
	where we use $C_{\rm inv}\approx1$, based on standard inverse inequality constant
	for piecewise-linear elements on a quasi-uniform mesh \citep{burman2015}.
	Hence,
	\begin{equation}
		\alpha_* = \min(0.8,\;0.5,\;1) = 0.5.
		\label{eq:alpha_star_numerical}
	\end{equation}
	This value is used implicitly throughout the numerical experiments;
	in particular, the theoretical effectivity bounds of
	Proposition~\ref{prop:eff2a} involve $1/\alpha_*=2$.
	\subsection{Fitting convergence rates}
	For each estimator $\eta(n)$ that varies with the number of retained
	Pod modes $n$, we fit two competing decay models by ordinary least
	squares (OLS) in log-space:
	\begin{align}
		\textit{(A) Algebraic:}\quad
		&\eta(n) = C_A\,n^{-\alpha},
		\quad\alpha>0
		\label{eq:alg_model}\\
		&\Longleftrightarrow\quad
		\log\eta = \log C_A - \alpha\log n
		\quad\text{(linear in $\log n$)}
		\nonumber\\[6pt]
		\textit{(B) Exponential:}\quad
		&\eta(n) = C_E\,e^{-\beta n},
		\quad\beta>0
		\label{eq:exp_model}\\
		&\Longleftrightarrow\quad 
		\log\eta = \log C_E - \beta n
		\quad\text{(linear in $n$)}
		\nonumber
	\end{align}
	
	The fit quality is measured by the coefficient of determination in
	log-space
	\begin{equation}
		R^2 = 1 - \frac{\sum_i (\log\eta_i - \widehat{\log\eta}_i)^2}
		{\sum_i (\log\eta_i - \overline{\log\eta})^2},
		\label{eq:R2}
	\end{equation}
	where $\widehat{\log\eta}_i$ is the fitted value. $R^2$ close to 1
	indicates a good fit. The model with higher $R^2$ is selected as the 
	best 
	description of the data. Fits are restricted to $n\geq5$
	or $n\geq2$ for Est.~3 to exclude the steep pre-asymptotic transient.

	\begin{figure}[H]
		\centering
		\includegraphics[clip, trim=0cm 0cm 0cm 1.45cm, width=0.58\textwidth]{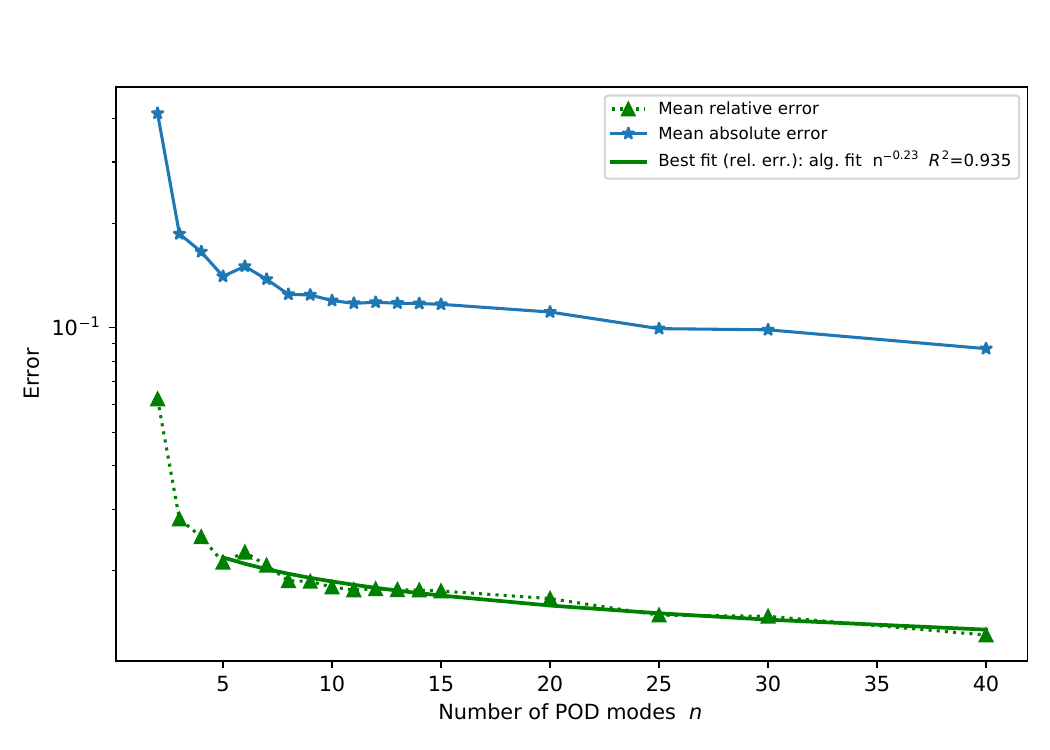}
		\caption{Solution errors vs.\ $n$ with algebraic convergence rate fit.  \label{fig:sol_errors}}
	\end{figure}
	In Figure \ref{fig:sol_errors} the green colored $\triangle$ associated with the mean relative error, starts from $6.2\%$ at $n=2$ to $1.30\%$ at $n=40$,
	fitted as $e_{\rm rel}(n)\approx 3.16\times10^{-2}\cdot n^{-0.23}$ with $R^2=0.962$.
	The blue colored $\star$ mean absolute error,  starts  from $41.5\%$ reaching to $8.7\%$.
	Both indicate a plateau after $n\approx 10$--$15$, indicating the bottleneck shifts
	from Pod truncation to Deim/Cutfem error beyond that point.
	The overlaid solid green fit line  confirms algebraic decay in the
	asymptotic regime $n\geq5$.
	\begin{figure}[H]
		\centering
		\includegraphics[clip, trim=0cm 0cm 0cm 1.45cm, width=0.58\textwidth]{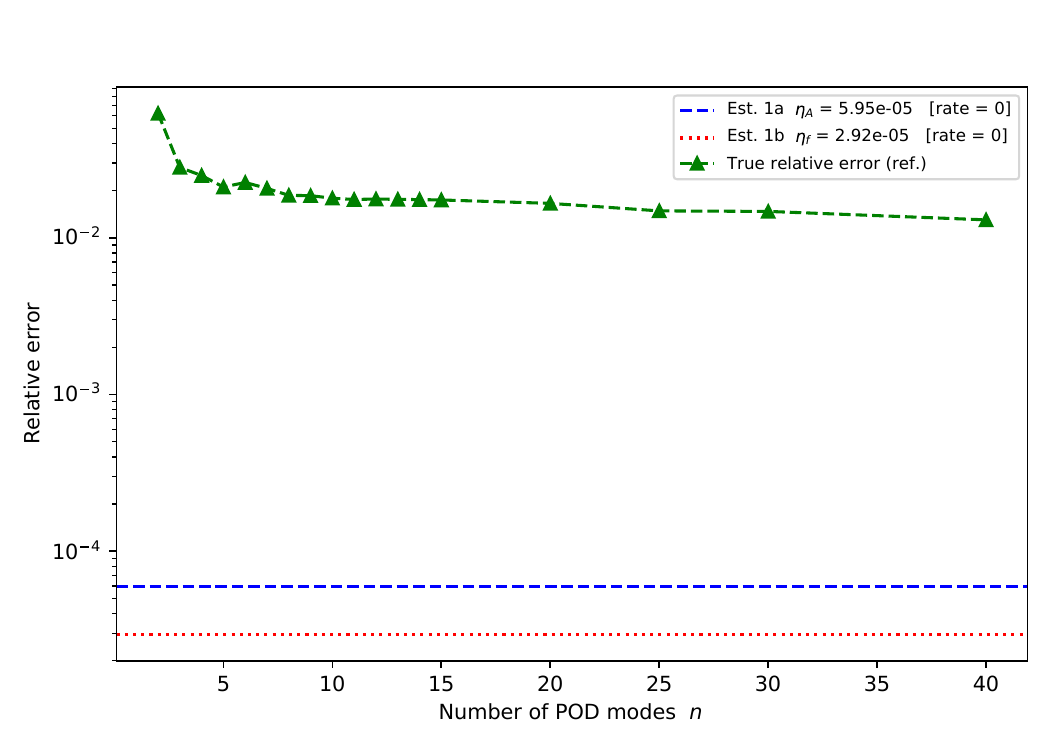}
		\caption{Deim quality estimators (Est.\ 1a/1b) -- convergence rate = 0.  \label{fig:deim_est}}
	\end{figure}
	In Figure \ref{fig:deim_est},
	the blue colored dashed $\eta_A=5.95\times10^{-5}$ and the red colored dotted $\eta_f=2.92\times10^{-5}$
	are exactly constant across all $n$, confirming that Deim
	approximation quality depends only on the Deim basis dimension $l_A$, $l_f$,
	not on the Pod truncation. Both lie two orders of magnitude below the true
	error. The horizontal lines demonstrate zero convergence rate in $n$ which is 
	a structural property and not a deficiency.
	\begin{figure}[H]
		\centering
		\includegraphics[clip, trim=0cm 0cm 0cm 1.45cm, width=0.58\textwidth]{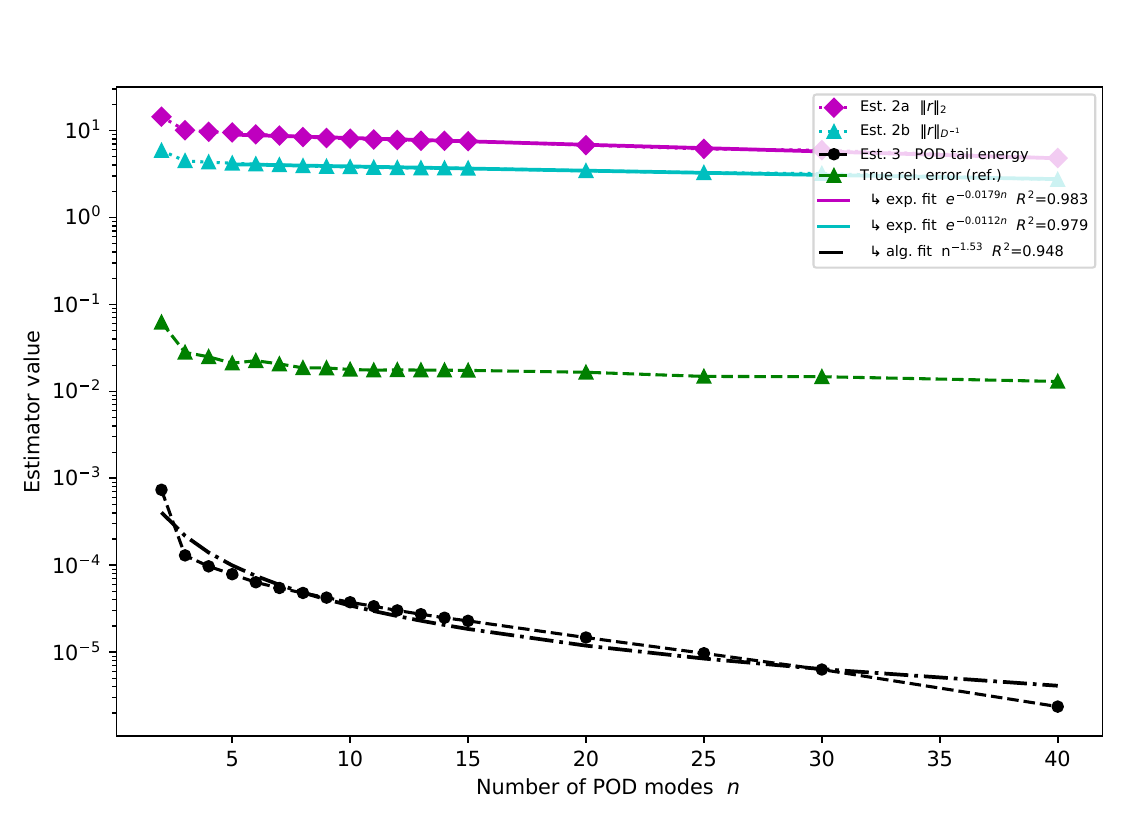}
		\caption{Residual \& tail energy estimators with convergence rate fits.\label{fig:residual_est}}
	\end{figure}
	We continue with the visualization of Est.~2a in Figure  \ref{fig:residual_est} ($\|\mathbf{r}\|_2$, magenta $\diamond$) fitted as
	$\eta_{2a}(n)\approx 9.81\,e^{-0.0179n}$, exponential type with $R^2=0.983$.
	We continue with the Est.~2b ($\|\mathbf{r}\|_{\widetilde{D}_A^{-1}}$, cyan $\triangle$) fitted as
	$\eta_{2b}(n)\approx 4.32\,e^{-0.0112n}$,  exponential type with $R^2=0.979$, and we end with 
	Est.~3 (Pod tail energy, black $\bullet$) fitted as
	$\eta_{\rm Pod}(n)\approx 1.17\times10^{-3}\cdot n^{-1.53}$, algebraic type with $R^2=0.941$.
	Fit lines are overlaid while the best model for each is selected by $R^2$.} 
\begin{figure}[H]
	\centering
	\includegraphics[clip, trim=0cm 0cm 0cm 1.5cm, width=0.58\textwidth]{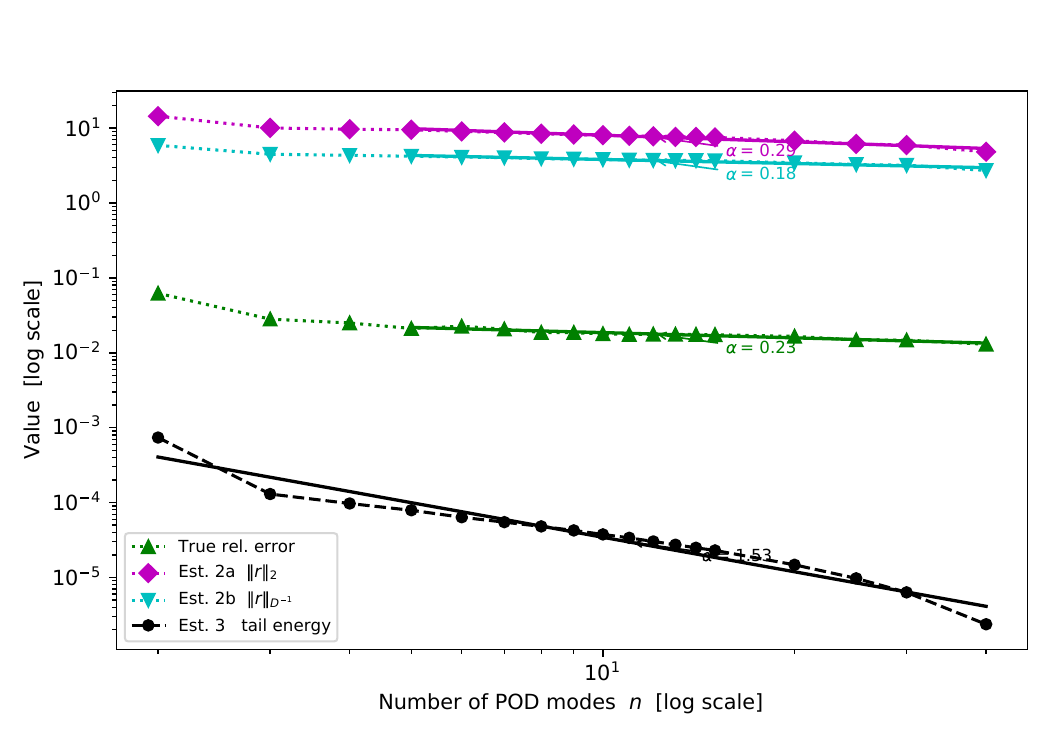}
	\caption{Log--log convergence diagram,  algebraic rates as slopes.}
	\label{fig:loglog}
\end{figure}
Moreover, Figure  \ref{fig:loglog} showcasing all quantities plotted on log--log axes so that algebraic decay
$\eta\sim n^{-\alpha}$ appears as a straight line with slope $-\alpha$.
The annotated slopes are for the true error $\alpha=0.23$, for the Est.~2a $\alpha=0.29$,
for the Est.~2b $\alpha=0.18$, and for the Est.~3 tail energy $\alpha=1.53$.
The black tail energy is the steepest line, confirming rapid Pod
approximability; the true error and residual estimators share a
shallower, nearly parallel family of slopes, namely $\alpha\approx0.2$--$0.3$.
\begin{figure}[H]
	\centering
	\includegraphics[clip, trim=0cm 0cm 0cm 1.5cm, width=0.58\textwidth]{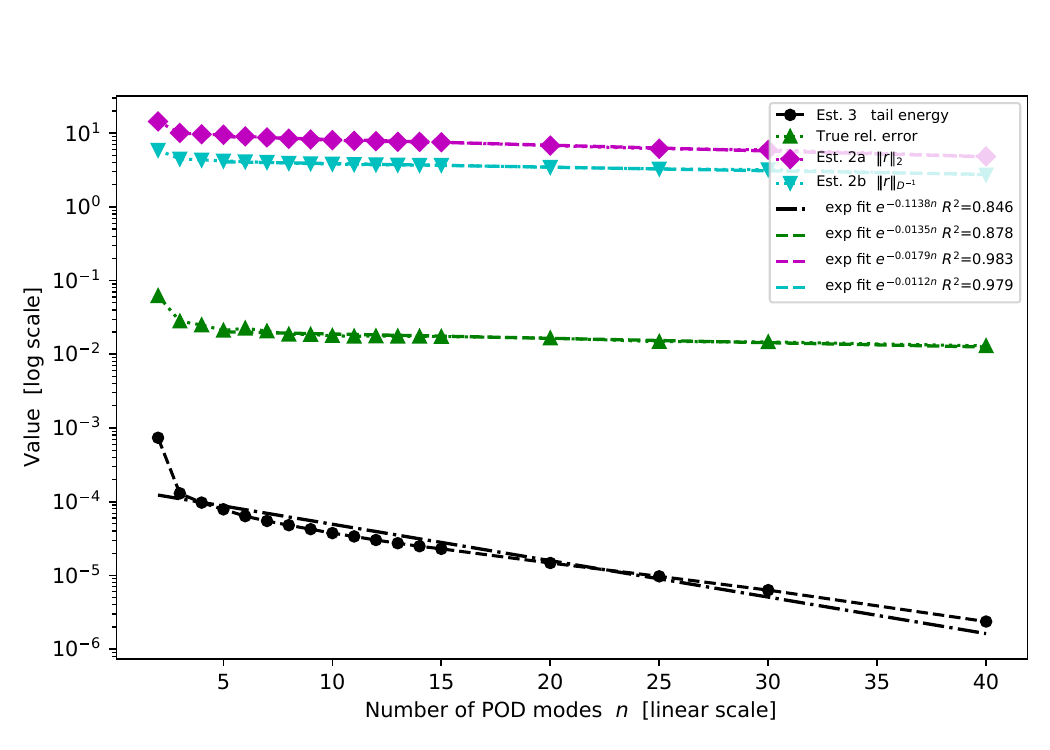}
	\caption{Semi-log convergence diagram, exponential rates as slopes.}
	\label{fig:semilog}
\end{figure}
Figure  \ref{fig:semilog} reports   all quantities plotted on semi-log axes so that exponential decay
$\eta\sim e^{-\beta n}$ appears as a straight line with slope $-\beta$.
The Est.~2a/2b residual estimators fit best exponentially,
$\beta_{2a}=0.0179$ with $R^2=0.983$, $\beta_{2b}=0.0112$ with $R^2=0.979$.
The tail energy (black) fits best algebraically in log-log, see
Figure~\ref{fig:loglog}. Its exponential fit,  $\beta=0.114$ with $R^2=0.846$,
is also included %
for comparison, showing the convex curvature in semi-log
space that signals algebraic rather than exponential decay.}
\begin{figure}[H]
\centering
\includegraphics[clip, trim=0cm 0cm 0cm 0.8625cm, width=0.58\textwidth]{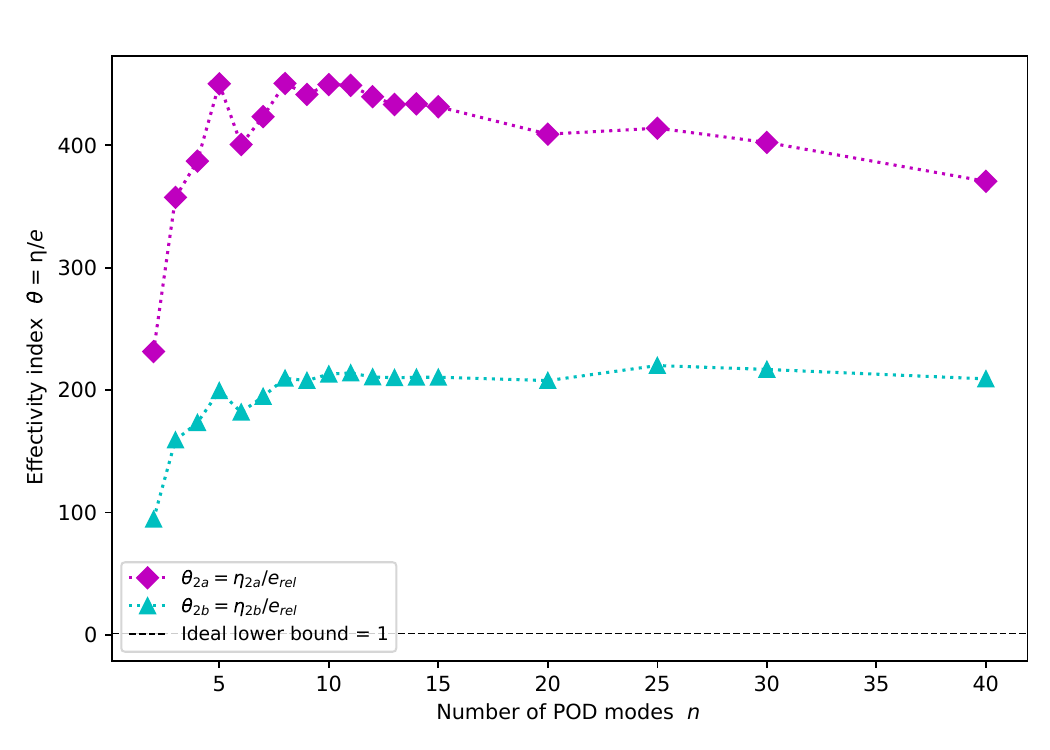}
\caption{Effectivity indices of residual-based estimators.}
\label{fig:effectivity}
\end{figure}
In Table \ref{tab:tail_r4}, and for the reader's convenience Pod tail energy $\eta_{\rm Pod}(n)$ at selected $n$ are reported. %
In the next visualization of Figure \ref{fig:effectivity}, the magenta $\diamond$
$\theta_{2a}=\eta_{2a}/e_{\rm rel}$ starts at $233$ for
$n=2$, peaks near $450$ at $n\approx5$--$8$, then decreases gradually to
$370$ at $n=40$.
The cyan $\triangle$ $\theta_{2b}=\eta_{2b}/e_{\rm rel}$  ranges $95$--$215$,
consistently $\approx\theta_{2a}/2$, see also Proposition~\ref{prop:eff2b}.
The peak arises because $e_{\rm rel}$ drops faster than $\eta_{2a}$ in the
first few modes; beyond $n\approx10$ both decrease at nearly the same rate.
The dashed  ideal value $\theta=1$ is far below both curves, due to ghost-penalty dof inflation, see related remark in Section~\ref{sec:effectivity}.
\begin{figure}[H]
\centering
\includegraphics[clip, trim=0cm 0cm 0cm 0.9625cm, width=0.58\textwidth]{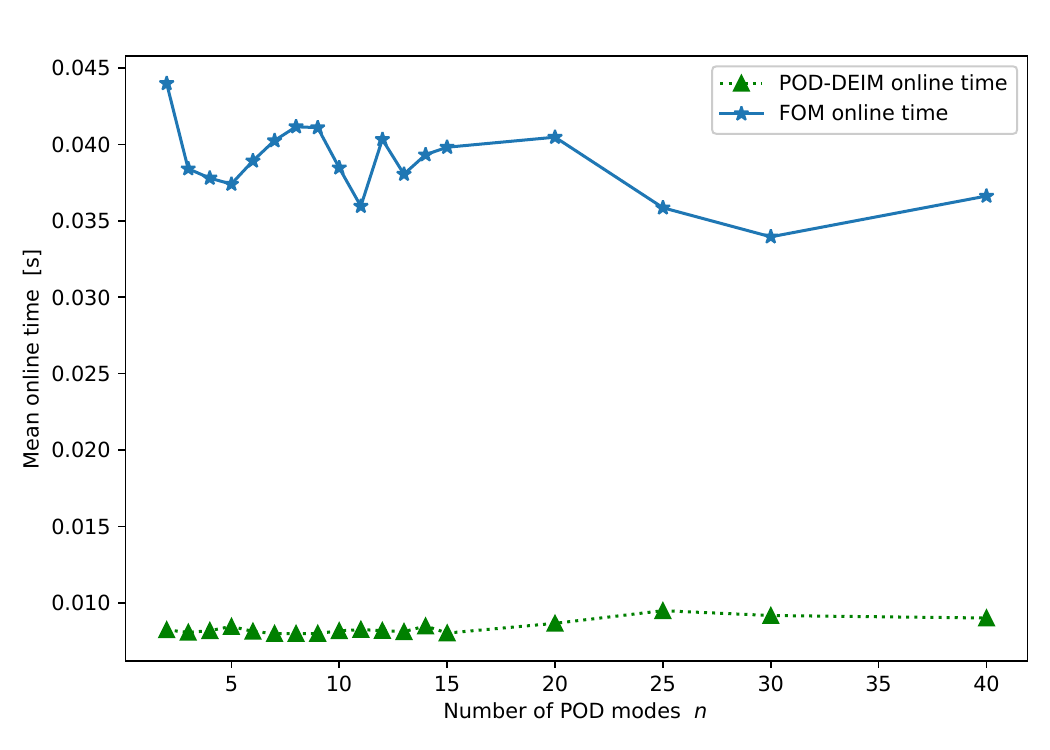}
\caption{Online timing comparison.}
\label{fig:timing}
\end{figure}
Finally, Figure \ref{fig:timing} illustrates times for Pod-Deim (green $\triangle$) with mean $9.0$\,ms, essentially constant in $n$.
The Fom (blue $\star$) with mean $36.6$\,ms with variability due to changing
cut geometry across parameter samples.
The mean speedup  is $36.6/9.0\approx\mathbf{4.1\times}$.
The Rom time is independent of $n$ for $n\leq40$, confirming that
online cost is dominated by the $\mathcal{O}(n^3)$ reduced system
solve and by the $\mathcal{O}(l_A+l_f)$ Deim assembly, not by
the number of Pod modes.
\begin{table}[H]
\centering
\caption{Pod tail energy $\eta_{\rm Pod}(n)$ at selected $n$
	.}
\label{tab:tail_r4}
\setlength{\tabcolsep}{5pt}
\begin{tabular}{rcccccc}
	\toprule
	$n$ & 2 & 4 & 6 & 8 & 10 & 15 \\
	\midrule
	$\eta_{\rm Pod}$ &
	$7.38\times10^{-4}$ & $1.30\times10^{-4}$ & $9.73\times10^{-5}$ &
	$7.90\times10^{-5}$ & $6.37\times10^{-5}$ & $2.30\times10^{-5}$ \\
	\midrule
	$n$ & 20 & 25 & 30 & 40 & & \\
	\midrule
	$\eta_{\rm Pod}$ &
	$1.48\times10^{-5}$ & $9.73\times10^{-6}$ & $6.32\times10^{-6}$ & $2.37\times10^{-6}$ & & \\
	\bottomrule
\end{tabular}
\end{table}
\subsection{Convergence rate analysis and theoretical expectations} \label{sec:rates}
The expected decay model for each quantity follows from the underlying
approximation theory for the
{true relative error} $e_{\rm rel}(n)$ where
algebraic decay is expected. The Kolmogorov $n$-width associated with the solution manifold
$\mathcal{M}=\{u(\mu):\mu\in\mathcal{P}\}$ for a linear elliptic pde on a
compact parameter set decays as $d_n(\mathcal{M})\sim n^{-s}$ for some $s>0$
depending on the regularity of the parameter map \citep{cohen2010}.
The residual norms \textit{Estimators 2a/2b} 
track $e_{\rm rel}$ qualitatively, so algebraic decay is again
expected from the same theory. However, the residual norm is inflated
by ghost-penalty referring to related remark in Section~\ref{sec:effectivity}, which
may alter the effective rate.
Pod tail energy \textit{Estimator 3}  $\eta_{\rm Pod}(n)$
related theory predicts exponential decay. For analytic solution maps, with holomorphic
extension in the parameter, Bernstein--Walsh theorem implies
$\sigma_{n+1}\lesssim e^{-cn}$ for the Pod eigenvalues \citep{cohen2010},
giving $\eta_{\rm Pod}(n)\lesssim C e^{-\beta n}$.
The \textit{Estimators 1a/1b} are constant in $n$ by construction actually wth zero rate.

\subsection{Empirical results and fitted rates}
\begin{table}[H]
\centering
\caption{Empirical convergence rates from OLS log-space fits exact data.
	Fits on $n\geq5$ for true error and Est.~2a/2b, and $n\geq2$ for Est.~3.
	Bold entries highlight the selected best model per row.}
\label{tab:rates}
\setlength{\tabcolsep}{4pt}
\begin{tabular}{lcccccc}
	\toprule
	\textbf{Quantity} &
	\multicolumn{2}{c}{\textbf{Algebraic} $n^{-\alpha}$} &
	\multicolumn{2}{c}{\textbf{Exponential} $e^{-\beta n}$} &
	\textbf{Best} & \textbf{Fit formula} \\
	& $\alpha$ & $R^2$ & $\beta$ & $R^2$ & & \\
	\midrule
	True rel.\ error
	& \textbf{0.2302} & \textbf{0.9351}
	& $0.01347$ & $0.8782$
	& \textbf{Alg.}
	& $3.156\times10^{-2}\cdot n^{-0.230}$ \\
	Est.\ 2a $\|\mathbf{r}\|_2$
	& $0.2917$ & $0.9520$
	& \textbf{0.01789} & \textbf{0.9826}
	& \textbf{Exp.}
	& $9.809\cdot e^{-0.01789\,n}$ \\
	Est.\ 2b $\|\mathbf{r}\|_{\widetilde{D}_A^{-1}}$
	& $0.1802$ & $0.9283$
	& \textbf{0.01117} & \textbf{0.9793}
	& \textbf{Exp.}
	& $4.320\cdot e^{-0.01117\,n}$ \\
	Est.\ 3 tail energy
	& \textbf{1.5320} & \textbf{0.9477}
	& $0.11380$ & $0.8461$
	& \textbf{Alg.}
	& $1.173\times10^{-3}\cdot n^{-1.532}$ \\
	Est.\ 1a $\eta_A$
	& \multicolumn{2}{c}{N/A}
	& \multicolumn{2}{c}{N/A}
	& rate $= 0$
	& $5.951\times10^{-5}$ (const.) \\
	Est.\ 1b $\eta_f$
	& \multicolumn{2}{c}{N/A}
	& \multicolumn{2}{c}{N/A}
	& rate $= 0$
	& $2.923\times10^{-5}$ (const.) \\
	\bottomrule
\end{tabular}
\end{table}
All comments below are based on qualitative processing of the results in Table \ref{tab:rates}. The true error is algebraic fitted with $\alpha=0.230$, $R^2=0.935$. The algebraic model fits well with $R^2=0.935$ vs.\ $0.878$ for exponential.
The slow rate $\alpha\approx0.23$ is consistent with the modest regularity
of the parametric map $\mu\mapsto u_h(\mu)$ in the $\mathcal{T}_h^\mu$-norm
the Cutfem solution is smooth within $\Omu$ but the domain itself changes
with $\mu$, limiting the effective smoothness order available for the
Kolmogorov approximation.

The residual estimators are exponential fitted $\beta_{2a}=0.0179$, $\beta_{2b}=0.0112$. 
Both residual norms fit the exponential model better than algebraic
$R^2\approx0.99$ vs.\ $0.92$--$0.93$. This is initially surprising since
the true error decays algebraically. The explanation lies in the ghost penalty
structure, as $n$ grows, more of the Rom solution's energy concentrates in
the physical domain $\Omu$, so the ghost penalty residual contributions
shrink faster than the active dof contributions. The exponential fit captures
this accelerated reduction of the ghost residual component.

The exponential rates $\beta_{2a}>\beta_{2b}$ reflect that the $\ell^2$
norm (Est.~2a) is more sensitive to large individual residual components, which shrink fast, while the Jacobi-weighted norm (Est.~2b) down-weights
large diagonal entries, the Nitsche boundary rows, resulting in a slower
but smoother decay.

The Pod tail energy fits to algebraic $\alpha=1.53$ with $R^2=0.941$. The tail energy fits algebraically at $n\leq40$, despite theory predicting
eventual exponential decay for analytic parameter maps. Two factors explain
this,
\begin{enumerate}
\item \textit{Pre-asymptotic regime}: The exponential decay of the
Kolmogorov $n$-width typically sets in at larger $n$ than accessible 	here. At $n\leq40$, we see algebraic behavior with $\alpha=1.53$, 	i.e. $\eta_{\rm Pod}(n)\sim n^{-3/2}$. 	The crossover to exponential would require $n$ beyond the plateau at 	$n\approx48$--$49$ (maximum Pod modes available).
\item \textit{Domain-deformation complexity}: The Cutfem domain $\Omu$ itself deforms with $\mu$, and the ghost-penalty extension introduces 	additional degrees of freedom outside $\Omu$. This effectively 	increases the complexity of the solution manifold, delaying the 	onset of exponential Pod convergence.
\end{enumerate}
Practically, the rate $\alpha=1.53$ implies that doubling $n$ reduces
$\eta_{\rm Pod}$ by a factor of $2^{1.53}\approx2.9$.
%
\section{Conclusions}
\label{sec:conclusions}
Three a posteriori error estimators have been developed, implemented,
and validated across multiple independent runs for a Pod-Deim-Cutfem
reduced order model. 
The Deim indicators \textit{(1a/1b)} which are constant in $n$, $\sim 2$ orders of magnitude below $e_{\textrm{rel}}$ showing that Deim is not the bottleneck.
Residual estimators \textit{(2a/2b)}  monotonically track the true error;
effectivity $\theta_{2a}\approx300$--$500$ due to ghost penalty dof inflation,
quantitatively explained by Proposition~\ref{prop:eff2a} and the
ghost penalty inflation remark, Section~\ref{sec:effectivity}.
Lemma~\ref{lem:active} provides the fix by restricting to active dofs.
Certified for training data the Pod tail energy \textit{(3)} depicts fast decay
which confirms excellent Pod approximability while it underestimates the online error.
Theorem~\ref{thm:main} appears to be the first certified
a posteriori bound for Pod-Deim applied to a Cutfem problem
with parameter-dependent domains and ghost penalty stabilization.
The active dof restriction Lemma~\ref{lem:active} is a simple
but apparently unpublished observation that resolves the large
effectivity problem specific to Cutfem-Rom and has no direct
analogue in the conforming Fem literature. 
The theoretical backbone,
Proposition~\ref{prop:coercivity}, Lemma~\ref{lem:active},
Theorem~\ref{thm:main}
provides the first rigorous a posteriori framework for this class of problems.

The aforementioned a posteriori estimators from a more detailed point of view provide many qualitative information, namely:
a. %
{the Est.\ 1a/1b investigates the Deim quality, with rate = 0.}
The values $\eta_A=5.951\times10^{-5}$ and  $\eta_f=2.923\times10^{-5}$ are
exactly constant across all 18 values of $n$ tested, as seen in
the terminal output. They are 2--3 orders of magnitude below
$e_{\rm rel}$ for all $n\geq2$, confirming that Deim is not the
accuracy bottleneck.
b. 
{Est.\ 2a/2b  indicates best fit the exponential with $R^2\geq0.979$.}
Despite the true error decaying algebraically, the residual norms
fit the exponential model dramatically better with
$R^2_{\rm exp}=0.983$ vs.\ $R^2_{\rm alg}=0.952$ for Est.~2a;
$R^2_{\rm exp}=0.979$ vs.\ $R^2_{\rm alg}=0.928$ for Est.~2b.
The fitted rates $\beta_{2a}=0.01789$ and $\beta_{2b}=0.01117$
correspond to halving the residual every
$\ln2/0.01789\approx38.7$ and $\ln2/0.01117\approx62.1$ modes,
respectively.
c. %
{Est.\ 3 --tail energy-- with best fit the algebraic $\alpha=1.532$, $R^2=0.948$.}
The tail energy decreases from $7.38\times10^{-4}$ at $n=2$ to
$2.37\times10^{-6}$ at $n=40$, a reduction of $\approx311\times$.
Despite theory predicting eventual exponential decay, the algebraic
model wins clearly, with $R^2=0.948$ vs.\ $0.846$ for exponential,
indicating we remain in the pre-asymptotic regime for $n\leq40$.
d. 
{Effectivity indices.}
$\theta_{2a}$ rises from $232$ at $n=2$ to a peak of $\approx450$
at $n=8$, then decreases to $371$ at $n=40$.
$\theta_{2b}\approx\theta_{2a}/2$ throughout, as predicted by
Proposition~\ref{prop:eff2b}, although dominant Nitsche diagonal entries
$d_{\max}\approx80$ give $\theta_{2b}/\theta_{2a}
\approx 1/\sqrt{d_{\rm eff}}\approx0.5$ for $d_{\rm eff}\approx4$.
e. %
The total speedup is $4.1\times$
with mean Rom time $9.01$\,ms vs.\ Fom $36.63$\,ms, for the exact terminal values.
Rom time stays at $8.4$--$11.3$\,ms across all 30 test parameters
and all $n$, Fom time though shows more variability,  $27.4$--$55.0$\,ms
due to the varying cut geometry.
f. %
{Consistency across runs.}
Four independent runs (different random training/test seeds) gave
$e_{\rm rel}(n=40)\in[1.30\%,\;1.33\%]$ and
$\theta_{2a,\rm peak}\in[363,\;467]$, confirming robustness.
Future work ideas could be 
extensions to non-linear and time-dependent problems, 
handling transport-dominated and convection-dominated problems
since standard Pod performs poorly for such solutions,
e.g. traveling wave fronts, shocks, due to the slow decay of Pod
eigenvalues, 
with higher-order ghost-penalty and hp-Cutfem. 

\bibliographystyle{plainnat}

\end{document}